\documentclass[a4paper]{amsart}

\usepackage{amssymb}
\usepackage{amsmath}
\usepackage{enumerate}
\usepackage{mathtools}
\usepackage{bm}
\usepackage{amsthm}
\usepackage{cleveref}

\newcommand{\FF}{\mathbf{F}}
\newcommand{\CC}{\mathbf{C}}

\newcommand{\PP}{\mathbf{P}}

\newcommand{\ch}{\mathop{\mathrm{ch}}\nolimits}

\newcommand{\Sym}{\mathop{\mathrm{Sym}}\nolimits}

\newcommand{\Disc}{\mathop{\mathrm{Disc}}\nolimits}

\newcommand{\Cat}{\mathop{\mathrm{Cat}}\nolimits}
\newcommand{\rank}{\mathop{\mathrm{rank}}\nolimits}
\newcommand{\Gal}{\mathop{\mathrm{Gal}}\nolimits}

\newtheorem{thm}{Theorem}[section]
\newtheorem{prop}[thm]{Proposition}
\newtheorem{lem}[thm]{Lemma}
\newtheorem{conj}[thm]{Conjecture}

\newtheorem{rmk}[thm]{Remark}
\newtheorem{exa}[thm]{Example}

\title[Exponential sums over singular binary quintics]{Exponential sums over singular binary quintics}
\author{Yasuhiro Ishitsuka}
\date{\today}

\begin{document}
\subjclass[2020]{Primary 11L07; Secondary 11T55, 13A50}
\keywords{Exponential sums, Binary quintic forms, Waring decomposition}

\begin{abstract}
    We give a characteristic-free explicit formula for exponential sums over singular binary quintic forms, based on the Waring decomposition of binary forms.  This extends the method used in our previous work on the space of binary quartics to a non-coregular space.
\end{abstract}
\maketitle

\section{Introduction}

In \cite{ITTX24}, the author and his coauthors determined the values of exponential sums over singular binary quartic forms over prime finite fields and applied the result to counting 2-Selmer elements with almost-prime discriminants. 
That work extends earlier results in \cite{Ishimoto}, \cite{Mor10}, and \cite{TT20a} on various \emph{prehomogeneous} vector spaces.  
In contrast to the preceding results, the space of binary quartic forms is not {prehomogeneous} but \emph{coregular}, 
and the authors had to employ another method to compute the exponential sums.
The purpose of this paper is to show that the method of \cite{ITTX24} applies to the space of binary quintics, which is not even \emph{coregular} (see \cite{Gey74} or \cite{GY}).

To describe our results, we recall the definition of the exponential sums of singular binary forms.  Let $p$ be a prime, and $q = p^k$ a power of $p$.
Let $V = V_1$ be a two-dimensional vector space over the field $\FF_q$ with $q$ elements.
Fix a nontrivial additive character $\psi \colon \FF_q \to \CC^\times$.
Let $V^*_n$ be the space of binary $n$-ic forms on $V_1$. 
The space $V_n^*$ admits a distinguished invariant $\Disc(f)$, the \emph{discriminant}, 
which detects the \emph{singular binary forms}, namely, the binary forms with a repeated factor.
The exponential sum of singular binary $n$-ic forms is defined as
\[
    \widehat{\Phi}(w) \coloneqq \dfrac{1}{\# V^*_n} \sum_{\substack{f \in V_n^* \\ \Disc(f) = 0}} \psi((w, f) ) \quad (w \in V_n).
\]
Here, the space $V_n$ is the dual of $V_n^*$, and $(\cdot, \cdot) \colon V_n \times V_n^* \to \FF_q$ is the natural perfect pairing.
In \cite{ITTX24}, we determined the sum for $n=4$, while \cite{TT20a} and \cite{Mor10} treat the case $n=3$.  
The method employed in \cite{ITTX24} can be described briefly as follows: we express the sum as a linear combination of counting functions for $\FF_q$-rational points on schemes 
and combine this expression with results from classical invariant theory.  

In this paper, we show that the method extends to the case 
$n=5$, where the \emph{Waring decomposition} is sufficient to obtain an explicit evaluation. Although the quintic case appears more complicated than the quartic case, the argument is in fact simpler, since it does not require the finer invariant-theoretic input needed when $n=4$.

\begin{thm}[{The main part of \Cref{thm:ExpQuint}}]\label{thm:Quint}
 	Let
    \[
        w = a_0(x^5)^* + a_1(x^4y)^* + \dots + a_5(y^5)^* \in V_5
    \]
    be an element.  
	If the covariant defined by
    \[
        \mathrm{Cat}_{2,2}(x, y \colon w) = \det \left(
        \begin{pmatrix}
            a_1 & a_2 & a_3 \\
            a_2 & a_3 & a_4 \\
            a_3 & a_4 & a_5
        \end{pmatrix} x
        -\begin{pmatrix}
            a_0 & a_1 & a_2 \\
            a_1 & a_2 & a_3 \\
            a_2 & a_3 & a_4
        \end{pmatrix}y
        \right)
    \]
    is nonzero,
	we have $\widehat{\Phi}(w) = C(w)q^{-4}$.
	Here $C(w)$ is the weighted sum 
    \[
       - \sum_{[s_0:s_1:s_2] \in \PP^2(\FF_q)}
       \varepsilon([s_0x^2 + s_1xy + s_2y^2])
    \]
    where the sum is taken over the points $[s_0: s_1: s_2] \in \PP^2(\FF_q)$ satisfying
    \[
        \begin{pmatrix}
            s_0 & s_1 & s_2
        \end{pmatrix}
        \begin{pmatrix}
            a_0 & a_1 & a_2 \\
            a_1 & a_2 & a_3 \\
            a_2 & a_3 & a_4
        \end{pmatrix}
        \begin{pmatrix}
            s_0 \\ s_1 \\ s_2
        \end{pmatrix} = \begin{pmatrix}
            s_0 & s_1 & s_2
        \end{pmatrix}
        \begin{pmatrix}
            a_1 & a_2 & a_3 \\
            a_2 & a_3 & a_4 \\
            a_3 & a_4 & a_5
        \end{pmatrix}
        \begin{pmatrix}
            s_0 \\ s_1 \\ s_2
        \end{pmatrix} = 0,
    \]
    and the function $\varepsilon \colon \PP(V_2^*) \to \CC$ is defined as
    \[
        \varepsilon([f_2]) = \begin{cases}
            1 & (\text{if $f_2$ is split}) \\
            0 & (\text{if $f_2$ is ramified}) \\
            -1 & (\text{if $f_2$ is inert})
        \end{cases}
    \]
 \end{thm}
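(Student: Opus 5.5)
The plan is to stratify the singular quintics by their repeated factor and then evaluate each stratum with orthogonality of $\psi$. Every singular $f\in V_5^*$ can be written as $f=\ell^2 g$, with $\ell$ a linear form over $\FF_q$ and $g\in V_3^*$; a repeated factor of degree $\ge 2$ is impossible over $\FF_q$ unless its square has degree at most $5$, and then its irreducible part is linear. So we write
\[
\mathbf 1_{\Disc(f)=0}=\sum_{[\ell]\in\PP^1(\FF_q)}\mathbf 1_{\ell^2\mid f}\;+\;(\text{correction for }f\text{ with two or more squared linear factors}),
\]
with the correction obtained by inclusion--exclusion. The terms involve conditions $\ell^2 m^2\mid f$ for distinct $[\ell],[m]$, conditions $\ell^3\mid f$, and so on. A cleaner way to organize the same computation is the Waring-type description dual to $V_5$. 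For a fixed $[\ell]$, the set $\{f:\ell^2\mid f\}$ is the linear subspace $\ell^2 V_3^*$ of codimension $2$. Its character sum is therefore $\#V_3^*\cdot\mathbf 1[w\perp \ell^2V_3^*]$. Also $w\perp\ell^2V_3^*$ says exactly that the order-$2$ contraction of $w$ by $\ell^2$ vanishes, a linear condition on $[\ell]$ expressed through the catalecticant.

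Each piece of the inclusion--exclusion is a linear subspace $W=h\cdot V^*_{5-\deg h}$, with $h=\ell^2$, $\ell^2m^2$, $\ell^3$, $\ell^4$, $\ell^2 m^3$, and so on. Each contributes $q^{-\deg h}\mathbf 1[w\perp W]$, where $w\perp hV^*_{5-\deg h}$ is the apolarity statement ``$h$ annihilates $w$''. The hypothesis $\Cat_{2,2}(x,y:w)\ne0$ should kill almost all of these. It means the pencil $xA-yB$ of catalecticant matrices is generically nonsingular, so no nonzero quadric $h$ can annihilate $w$ to order $3$. In particular $\ell^2\perp w$ and $\ell^3\perp w$ are impossible. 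The surviving terms are exactly those with $\deg h=4$, namely $h=\ell^2m^2$ (with $\ell$ and $m$ possibly Galois-conjugate) and $h=\ell^4$. For these the condition $h\circ w=0$ in $V_1$ is two linear equations. Writing $h=s^2$ with $s=s_0x^2+s_1xy+s_2y^2$, they become the two quadratic equations ${}^tsAs={}^tsBs=0$ in the statement.

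The inclusion--exclusion over the squared linear factors therefore yields a sum over quadrics $[s]\in\PP^2(\FF_q)$ on the intersection of the two conics. Each $[s]$ is weighted according to how $s$ factors:
\begin{itemize}
\item split, $s=\ell m$: it is counted by the pair $\{[\ell],[m]\}$ in the pairwise correction, giving sign $-1$ after normalization;
\item ramified, $s=\ell^2$: the $\ell^4$ terms cancel among the single-$\ell$ terms;
\item inert: it enters because $\ell^2\bar\ell^{\,2}\mid f$ for a conjugate pair, which is a singular $f$ counted with no rational $\ell$, hence with the opposite sign.
\end{itemize}
Combining these with the normalization $1/\#V_5^*=q^{-6}$ and the factor $\#W=q^{2}$ gives $q^{-4}$ times $-\sum\varepsilon([s])$.

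The main obstacle is the bookkeeping. One must write the indicator of $\Disc=0$ exactly as a signed combination of indicators of the subspaces $hV^*$, including conjugate pairs over $\FF_{q^2}$. One must then check that the hypothesis $\Cat_{2,2}\neq0$ really forces every term with $\deg h\le3$ to vanish for $w\ne0$, and treat $w=0$ separately. For the first task I would count, for each type of singular $f$ (one double root; one triple root; two double roots, rational or conjugate; quadruple root; a double and a triple root; quintuple root), its multiplicity in the candidate formula, and solve for the coefficients so that every type is counted exactly once. For the second, I would use the Sylvester-type fact that $\ker$ of the catalecticant map $V_3^*\to V_2$ (order-$3$ apolar forms) and $\ker$ of $V_2^*\to V_3$ control the Waring rank, and that $\Cat_{2,2}\ne0$ means the apolar ideal has no generator in degree $\le2$ beyond the generic one.
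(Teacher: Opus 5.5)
Your route is the paper's, written affinely. The paper's identity $\bm{1}_{\Disc}=\#\psi_{1^2,3}^{-1}-\#\psi_{1^2,1^2,1}^{-1}+\#\psi_{2^2,1}^{-1}$ is exactly a signed sum of indicators of the subspaces $\ell^2V_3^*$, $\ell^2m^2V_1^*$ and $s^2V_1^*$. Fourier-transforming those indicators directly, as you propose, just replaces the paper's passage through $q^6\widehat\Phi(w)=1+qN_w-N_0$ and the fiber counts. Your final weights are also right: each $[s]$ with $s^2\in w^\bot$ contributes $-q^{-4}$ if split, $0$ if ramified, $+q^{-4}$ if inert.

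Two of your supporting claims are false, however.

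\textbf{First:} a singular quintic need not have a rational repeated linear factor, since type $\langle 2^2,1\rangle$ has none. You reinstate this case later via conjugate pairs, but the opening sentence is wrong.

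\textbf{Second, and more importantly:} $\Cat_{2,2}(x,y:w)\neq0$ does \emph{not} rule out $\ell^3\in w^\bot$. For Waring type $\langle 1^3\rangle$ one has $(w^\bot)_3=\FF_q\,\Cat_{2,2}=\FF_q\,\ell^3$. For example, $w=a_0(x^5)^*+a_1(x^4y)^*+a_2(x^3y^2)^*$ with $a_2\neq0$ has $\Cat_{2,2}=a_2^3y^3$. Moreover, no hypothesis kills a degree-$5$ piece $h$, because $w\perp\FF_q h$ is the single linear condition $(w,h)=0$. So if your inclusion--exclusion really contained $\ell^3$, $\ell^5$ or $\ell^2m^3$ terms with nonzero coefficients, your mechanism for discarding them would fail and leave spurious $q^{-3}$ or $q^{-5}$ terms.

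The correct reason these terms do not matter is that their coefficients are zero. The bookkeeping you describe is forced:
\begin{itemize}
\item type $\langle 1^2,1,1,1\rangle$ fixes the $\ell^2$ coefficient at $1$;
\item type $\langle 1^3,1,1\rangle$ then forces the $\ell^3$ coefficient to be $0$;
\item type $\langle 1^4,1\rangle$ forces the $\ell^4$ coefficient to be $0$, and so on.
\end{itemize}
The result is the exact identity, valid for every $f\in V_5^*$ including $f=0$ (where one uses $\sum_{[s]}\varepsilon(s)=q$):
\[
\bm{1}_{\Disc(f)=0}=\sum_{[\ell]\in\PP(V_1^*)}\bm{1}[\ell^2\mid f]-\sum_{[s]\in\PP(V_2^*)}\varepsilon(s)\,\bm{1}[s^2\mid f].
\]
Transforming it gives, for every $w$,
\[
\widehat\Phi(w)=q^{-2}\,\#\{[\ell]:\ell^2\in w^\bot\}-q^{-4}\sum_{[s]:\,s^2\in w^\bot}\varepsilon(s).
\]
After this, only the $\ell^2$ terms need to be killed. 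Your pencil argument does that correctly: a quadric in $w^\bot$ is a common kernel vector of $A_{2,2;0}$ and $A_{2,2;1}$, which forces $\det(xA_{2,2;1}-yA_{2,2;0})\equiv0$. With the identity stated explicitly in place of the false vanishing claims, your argument is complete, and nothing in it depends on the characteristic.
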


Hence $C(w)$ is an integer with $|C(w)| \le 4$.
Moreover, this estimate is optimal in the sense that, as $\FF_q$ varies, $C(w)$ attains every integer value between $-4$ and $4$. We provide examples realizing each of these values.

\begin{rmk}
    In independent work, Wafik studies a related exponential sum in \cite{Waf}. He applies similar geometric and Waring decompositions to a different indicator function and expresses the numbers of rational points on the resulting auxiliary schemes in terms of various covariants. In particular, he explicitly determines the number of points in the fiber of the morphism associated with forms of the shape $f_2^2f_1$, one of the fiber-counting problems considered in Section \ref{sct:2^21}.
\end{rmk}

In view of the results for $n=3$ in \cite{TT20a} and $n=4$ in \cite{ITTX24}, we conjecture the following estimate for exponential sums in the generic case:

\begin{conj}\label{conj:est}
	Fix $n \ge 3$.
    For a generic form $w$ of degree $n$, we have $\widehat{\Phi}(w) = O(q^{-(n+3)/2})$.
	Here, genericity means the catalecticant invariant of $w$ does not vanish (if $n$ is even), or
	the catalecticant covariant of $w$ is nonsingular (if $n$ is odd).
\end{conj}

Note that this estimate for the singular exponential sum is better than the bound $O(q^{-(\dim V + 1)/2}) = O(q^{-(n+2)/2})$ suggested by square-root cancellation.
We will address this conjecture in a forthcoming paper.

This paper is organized as follows.
In Section \ref{sct:Waring}, we recall results on Waring decompositions in a form applicable over fields of small characteristic.  
In Section \ref{sct:Quintics}, we show Theorem \ref{thm:Quint} for the case when $\ch \FF_q > 2$.
The proof is based on the method of \cite{ITTX24}: we express the exponential sum in terms of counting functions for rational points on certain schemes and evaluate these functions using classical invariant theory.  The results in Section \ref{sct:Waring} are used in these calculations.
We also give examples of exponential sums achieving each integer $C(w)$ with $|C(w)| \le 4$.
In the appendix, we consider the case $\ch \FF_q = 2$.

\section*{Acknowledgements}
The author thanks Tatsuya Ohshita, Tetsushi Ito, Takashi Taniguchi, and Frank Thorne for their comments and discussions.  He also thanks Mohamed Wafik for sharing an unpublished manuscript and for helpful discussions on related counting problems for binary quintic forms.
He is supported by JSPS KAKENHI Grant Numbers 20K03747, 21K13773, 21K18557, 24K21512, and 26K06745.

The author acknowledges the use of OpenAI's ChatGPT and Codex during the preparation of this work. These tools were used as aids in organizing and refining the exposition and in checking calculations and other details. All mathematical arguments, proofs, and final verifications were independently reviewed by the author, who assumes full responsibility for the contents of this paper.

\section{Preliminaries on generalized Waring decomposition}\label{sct:Waring}
To obtain our main estimate (Theorem \ref{thm:Quint}), 
we need to recall some results on generalized Waring decompositions.
For our purposes, the results must be stated in a characteristic-free form.
For reference, see \cite{IK}.

\subsection{Exponential sums on singular binary forms}
Let $p$ be a prime, $q = p^k$ be a power of $p$ with $k \ge 1$,
and fix an additive character $\psi \colon \FF_q \to \CC^\times$.
Let $V_1$ be a two-dimensional vector space over $\FF_q$, with the basis $e_x, e_y$.
Let $V_1^*$ denote the dual space of $V_1$, and let $x,y$ be the basis dual to $e_x,e_y$.

For $n \ge 1$, we define $V_n^* = \Sym^n V_1^*$ the space of binary $n$-ic forms with coordinates $x, y$: for other $n$, we define
$V_0^* = \FF_q$, and $V_n^* = 0$ for $n < 0$. 
Let $V^*_{\bullet} = \bigoplus_{n \ge 0} V_n^*$ be the symmetric algebra of $V_1^*$.
It is isomorphic, as a graded algebra, to $\FF_q[x,y]$ with its standard grading.
An element of $V_n^*$ is \emph{singular} if it has a repeated factor.

We introduce the notion of splitting types.
Let us assume that a nonzero element $f \in V_n^*$ has an
irreducible factorization $cf_1^{e_1} f_2^{e_2} \dots f_r^{e_r}$ with a constant $c \in \FF_q$ and distinct factors $f_i$ of degree $d_i$.
Then we say that $f$ is of the {\emph{splitting type}} $\langle d_1^{e_1}, d_2^{e_2}, \dots, d_r^{e_r}\rangle$.  This is a combinatorial abstraction of irreducible factorizations.
The element is singular if and only if there is an index $1 \le i \le r$ with $e_i > 1$.

Let $V_n = D_n V = (V_n^*)^*$ be the dual space of $V_n^*$, and
$V_\bullet = \bigoplus_{n \ge 0} V_n$ be the divided coalgebra of $V_1^*$ with standard grading.
Let
\[
	( \cdot, \cdot) \colon V_n \times V_n^* \to \FF_q
\]
be the natural pairing.
We will consider the indicator function $\Phi \colon V_n^* \to \CC$ of singular binary forms as a function on $V_n^*$,
and the corresponding exponential sum $\widehat{\Phi} \colon V_n \to \CC$ defined by
\[
	\widehat{\Phi}(w) = \frac{1}{\# V_n^*} \sum_{v \in V_n^*} \Phi(v) \psi((w, v))
\]
as a function on $V_n$.

\subsection{Polar pairings}
For $0 \le m \le n$, let $\mu \colon V_{m}^* \times V_{n-m}^* \to V_{n}^*$ be multiplication in the symmetric algebra $V^*_\bullet$.
The \emph{polar pairing} 
\[
	( \cdot, \cdot ) \colon V_n \times V_m^* \to V_{n-m}
\]
is defined as 
\[
	( w, f_1 ) (f_2) = ( w, \mu(f_1, f_2) ) = (w, f_1f_2)
\]
for $w \in V_n$, $f_1 \in V_m^*$ and $f_2 \in V_{n-m}^*$. 
When $n = m$, it is the natural pairing $V_n \otimes V_n^* \to V_0 = \FF_q$, and the polar pairing is a natural extension of the pairing. 
We also note that when $n < m$, the pairing is the zero pairing since $V_{n-m} = 0$.

We describe the matrix representation of the polar pairing.
Let
\[
	\mathrm{st}^*_n = (x^n, x^{n-1}y, \dots, y^n)
\] 
be the standard basis of $V_n^*$, and define
\[
	\mathrm{st}_n = ((x^n)^*, (x^{n-1}y)^*, \dots, (y^n)^*)
\]
as the dual basis of $\mathrm{st}^*_n$.
Let $w \in V_n$ be an element described by
\[
	w = a_0(x^n)^* + a_1 (x^{n-1}y)^* + \dots + a_n (y^n)^*.
\]
We introduce the matrices
\[
	A_{s, t; r} = A_{s, t; r}(w) \coloneqq \begin{pmatrix}
		a_{r} & a_{r+1} & \dots & a_{r+s} \\
		a_{r+1} & a_{r+2} & \dots & a_{r+s+1} \\
		\vdots & \vdots & \ddots & \vdots \\
		a_{r+t} & a_{r+t+1} & \dots & a_{r+s+t}
	\end{pmatrix}
\]
for $0 \le r, s, t$ with $r + s + t \le n$.
This represents the map 
\[
	(w, x^{n-r-s-t}y^{r})(\cdot) 
    \colon V_s^* \to V_t
\]
with respect to the bases $\mathrm{st}_s^*$ and $\mathrm{st}_t$.

If $(w, f)= 0$, we say $w$ is \emph{apolar} to $f$, and 
$f$ is \emph{apolar} to $w$.
For fixed $w \in V_n$, we denote the set of polynomials apolar to $w$ 
\[
	\begin{aligned}
		w^\bot &\coloneqq \{ f \in V^*_\bullet \mid ( w, f ) = 0\}, \\
		(w^\bot)_n &\coloneqq \{ f \in V^*_n \mid ( w, f ) = 0\}.
	\end{aligned}
\]
By definition, the set $w^\bot$ forms an ideal of $V^*_\bullet$, called the \emph{apolar ideal} to $w$.

\begin{thm}\label{thm:CI}
	Let $0 \neq w \in V_n$ with $n =2m+1 > 0$.
	\begin{enumerate}[{(i) }]
		\item (\cite[Theorem 1.54]{IK}.) Let $s \coloneqq \rank A_{m+1,m; 0}(w) \le m+1$. Then the apolar ideal $w^\bot$ is a complete intersection ideal of $V^*_\bullet$ 
		generated by two elements $\varphi_s \in V_s^*$ and $\varphi_{n+2-s} \in V_{n+2-s}^*$ (note that $s < n+2-s$).
		\item ({cf. \cite[Lemma 5.3]{KR}}) The covariant
		\[
			\Cat_{m, m}(x, y; w) = \det \left(
				A_{m, m; 1}(w) x - A_{m, m; 0}(w) y
			\right) \in V_{m+1}^*
		\]
		is apolar to $w$.  If $\Cat_{m,m}(x,y; w)$ vanishes, then the integer $s$ in (i) is less than $m+1$.
	\end{enumerate}
\end{thm}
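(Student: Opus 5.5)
Part (i) is cited from \cite[Theorem 1.54]{IK}, so I will only recall its structure and focus on (ii). The plan is to identify the kernel of the catalecticant $A_{m,m;0}$-type maps with apolar forms, and to write $\Cat_{m,m}(x,y;w)$ as a formal determinant whose expansion pairs to zero with $w$.

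\textbf{Apolarity of $\Cat_{m,m}$.} By the description of $A_{s,t;r}$ above, a form $f\in V_{m+1}^*$ is apolar to $w$ exactly when $(w,f)\in V_m$ vanishes. Its coefficients are the entries of $A_{m+1,m;0}(w)\,c$, where $c$ is the coefficient vector of $f$ in $\mathrm{st}^*_{m+1}$. The matrix $A_{m+1,m;0}$ is $(m+1)\times(m+2)$. Its first $m+1$ columns form $A_{m,m;0}$ and its last $m+1$ columns form $A_{m,m;1}$. Define
\[
F(x,y)=\det\begin{pmatrix} A_{m+1,m;0}(w)\\ \rho(x,y)\end{pmatrix},
\]
where $\rho$ is the row vector $(y^{m+1},-xy^{m},\dots,\pm x^{m+1})$, with signs and ordering chosen so that the expansion along the last row gives the maximal minors as coefficients. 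By Cramer's rule (a row duplicated in the bordered determinant), the vector of signed maximal minors lies in the kernel of $A_{m+1,m;0}$. Hence $F$ is apolar to $w$ over $\mathbf{Z}[a_0,\dots,a_n]$, and therefore in every characteristic.

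It remains to show $F=\pm\Cat_{m,m}$. I will use a column operation on the bordered matrix: subtract $x$ times column $j+1$ from $y$ times column $j$, working over $\FF_q(x,y)$ and homogenizing. This turns the Hankel block into $A_{m,m;1}x-A_{m,m;0}y$ and kills all of the last row except one entry. The shift structure of a Hankel matrix is exactly what makes this work, and the degree count ($m+1$ on both sides) fixes the normalization. An equivalent check is the standard identity that the maximal minors of a $(k)\times(k+1)$ matrix $[\,u\,|\,B\,]$ expand as $\det(B_1x-B_0y)$ when $B_0,B_1$ are the shifted square blocks. I would verify this generically, e.g.\ by specializing to $w=\sum \lambda_i\ell_i^{[n]}$, where both sides vanish on the $\ell_i$ up to a constant, and then compare one coefficient.

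\textbf{Vanishing case.} If $\Cat_{m,m}=0$, then all maximal minors of $A_{m+1,m;0}$ vanish by the identity above, so $\rank A_{m+1,m;0}\le m$, i.e.\ $s<m+1$. This direction is immediate once the identification of $F$ with $\Cat$ is established.

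\textbf{Main obstacle.} The delicate step is the sign and normalization bookkeeping in $F=\pm\Cat_{m,m}$, and making it characteristic-free. Because I do the column elimination with integer coefficients, no division by $p$ occurs, so the identity holds over $\mathbf{Z}$ and then specializes to $\FF_q$.
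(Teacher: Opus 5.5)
Your argument is essentially the paper's. Part (i) is cited, a bordered determinant of $A_{m+1,m;0}(w)$ is apolar to $w$ by Cramer's rule, it is identified with $\Cat_{m,m}$, and the vanishing of all maximal minors forces $s\le m$. Your column elimination $C_j\mapsto xC_{j+1}-yC_j$ is a clean way to carry out the paper's ``direct calculation'': with the border row on top, both determinants pick up the same factor $(-y)^{m+1}$, so you even get exact equality.

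One fixable slip: the border row must be $(x^{m+1},x^my,\dots,y^{m+1})$, in the standard order and with no extra signs, because the cofactor expansion already supplies the alternating signs of the kernel vector. Your reversed alternating row fails already for $m=0$, where $\det\begin{pmatrix}a_0&a_1\\ y&-x\end{pmatrix}=-(a_0x+a_1y)$ pairs with $w$ to $-(a_0^2+a_1^2)$. With that row, your elimination also would not clear the border.
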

\begin{proof}
	The first part (i) is \cite[Theorem 1.53 (i), Theorem 1.54]{IK}.  We prove (ii).
	Since the linear map $(w, \cdot) \colon V_{m+1}^* \to V_{m}$ is represented by
	\[
		A_{m+1, m;0}(w) = \begin{pmatrix}
			a_{0} & a_{1} & \dots & a_{m+1} \\
			a_{1} & a_{2} & \dots & a_{m+2} \\
			\vdots & \vdots & \ddots & \vdots \\
			a_{m} & a_{m+1} & \dots & a_{2m+1}
		\end{pmatrix}
	\]
    with respect to the bases $\mathrm{st}_{m+1}^*$ and $\mathrm{st}_m$,
	the binary form of degree $m+1$
	\[
        \det \begin{pmatrix}
            x^{m+1} & x^{m}y & \dots & y^{m+1}\\
			a_{0} & a_{1} & \dots & a_{m+1} \\
			a_{1} & a_{2} & \dots & a_{m+2} \\
			\vdots & \vdots & \ddots & \vdots \\
		      a_{m} & a_{m+1} & \dots & a_{2m+1} 
		\end{pmatrix},
	\]
	is in the kernel of the map $(w, \cdot)$. In other words, the form is apolar to $w$.
	By direct calculation, 
    we may check that this form coincides with $\Cat_{m,m}(x,y; w)$.

	The coefficients of $\Cat_{m,m}(x,y;w)$ are, up to signs, the maximal minors of $A_{m+1, m;0}(w)$. 
    Hence if $\Cat_{m,m}(x,y; w) = 0$, any maximal minor of $A_{m+1, m; 0}(w)$ vanishes, and $A_{m+1, m; 0}(w)$ has rank at most $m$.
\end{proof}

We denote the integer $1 \le s \le \lceil n/2 \rceil$ in Theorem \ref{thm:CI}(i) by $\rank(w)$ and call it the \emph{catalecticant rank}, following \cite{BBM}.
We define $\rank(0) = 0$.
By Theorem \ref{thm:CI} (ii), if $w \neq 0$ is of odd degree, 
the minimal generator $\phi_s$ of degree $s$ is unique up to scalar.
We define the \emph{Waring type} of $w$ of odd degree as the splitting type of 
the minimal generator $\phi_s$.

\subsection{Generalized Waring decomposition}

To compute the exponential sum, we use a more explicit description of $w$ in terms of its Waring type.
For a binary form $f \in V_n^*$, we define the set of elements in $V_\bullet$ apolar to $f \in V^*_\bullet$:
\[
	\begin{aligned}
		^{\bot}f &\coloneqq \left\{ w \in V_\bullet \mid (w, f) = 0 \right\}, \\
		(^{\bot}f)_r &\coloneqq \left\{ w \in V_r \mid (w, f)= 0 \right\}.
	\end{aligned}
\]
\begin{exa}
As is easily checked, for a nonzero linear form $\ell(x,y) = ax + by$, the space $({}^\bot \ell)_n$ is spanned by the single element
\[
	b^n (x^n)^* - ab^{n-1} (x^{n-1}y)^* + \dots + (-1)^n a^n (y^n)^*. 
\]
Moreover, we can describe the case when $\ell = x$ or $y$:
\[
	\begin{aligned}
		({}^\bot x^e)_n &= \{a_0 (x^n)^* + a_1(x^{n-1}y)^* + \dots + a_n(y^n)^* \in V_n \mid a_0 = \dots = a_{n-e} = 0\} \\
        ({}^\bot y^e)_n &= \{a_0 (x^n)^* + a_1(x^{n-1}y)^* + \dots + a_n(y^n)^* \in V_n \mid a_e = \dots = a_{n} = 0 \}.
	\end{aligned}
\]
\end{exa}
Assume that $f$ factors as $f_1^{e_1}f_2^{e_2} \dots f_r^{e_r}$, where the $f_i$ are pairwise coprime irreducible factors.
Let $\langle h_1, h_2, \dots, h_u \rangle \subset V_\bullet^*$ be the ideal of $V^*_\bullet$ generated by $h_1, \dots, h_u \in V^*_\bullet$, and define
$\langle h_1, h_2, \dots,h_u \rangle_m$ as the degree $m$ part of the ideal $\langle h_1, h_2, \dots, h_u \rangle$.
Then we have
\[
	\langle f \rangle = \bigcap_{1 \le i \le r} \langle f_i^{e_i} \rangle, \quad \text{and} \quad \langle f \rangle_m = \bigcap_{1 \le i \le r} \langle f_i^{e_i} \rangle_m.
\]
Taking orthogonal complements in the perfect pairing $V_m \times V_m^* \to V_0 = \FF_q$, the second equality above implies that, for every integer $m \ge 0$,
\begin{equation}\label{eq:sumofVS}
	({}^\bot f)_m = \sum_{1 \le i \le r} ({}^\bot f_i^{e_i})_m.
\end{equation}
Moreover, if $n \le m$, then we have the following theorem.
When $\ch \FF_q = p > n$, this result is well known (see \cite[Lemma 1.31]{IK}, \cite{GY}, and \cite[Lemma 3.3]{KR}). 

\begin{thm}[{Generalized Waring decomposition}]\label{thm:Waring}
	Let $k$ be a field of any characteristic.
	Let $f \in V^*_n$ be a binary $n$-ic form over $k$, and $n \le m$.
    Then we have
	\[
	   ({}^\bot f)_m = \bigoplus_{1 \le i \le r} ({}^\bot f_i^{e_i})_m.
    \]
\end{thm}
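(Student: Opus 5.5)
Since \eqref{eq:sumofVS} already gives $({}^\bot f)_m = \sum_i ({}^\bot f_i^{e_i})_m$, only directness remains. I will prove it by a dimension count. For any nonzero $g \in V_d^*$ with $d \le m$, the orthogonal complement of $\langle g\rangle_m = g\cdot V_{m-d}^*$ in $V_m$ is $({}^\bot g)_m$. Multiplication by $g$ is injective on $V_{m-d}^*$, since $V^*_\bullet \cong \FF_q[x,y]$ (or $k[x,y]$) is a domain; this holds in any characteristic. Hence $\dim \langle g\rangle_m = m-d+1$, and therefore $\dim ({}^\bot g)_m = (m+1)-(m-d+1) = d$.

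Apply this with $g = f$ and with $g = f_i^{e_i}$. We get $\dim ({}^\bot f)_m = n$ and $\dim ({}^\bot f_i^{e_i})_m = d_i e_i$, where $d_i = \deg f_i$. Since $\sum_i d_i e_i = n$, the sum of the dimensions of the summands equals the dimension of their sum $({}^\bot f)_m$. A finite sum of subspaces whose dimensions add up to the dimension of the sum is direct, because the natural surjection $\bigoplus_i ({}^\bot f_i^{e_i})_m \to ({}^\bot f)_m$ is then an isomorphism. This completes the argument.

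The hypothesis $n \le m$ enters exactly in the dimension formula: for $d > m$ we have $\langle g\rangle_m = 0$, so the complement is all of $V_m$ and the count $\dim = d$ fails. I should also make sure that the identity $\langle f\rangle_m = \bigcap_i \langle f_i^{e_i}\rangle_m$ used for \eqref{eq:sumofVS} is characteristic-free. It is, since it only uses unique factorization in $k[x,y]$ and the pairwise coprimality of the $f_i$; no divided-power or binomial-coefficient issue arises, because the pairing $V_m \times V_m^* \to k$ is perfect by construction of $V_m$ as the dual. Over a non-finite field $k$ the same argument works verbatim with $k$ replacing $\FF_q$.

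I expect no serious obstacle. The only point needing care is that ``orthogonal complement of an intersection equals the sum of the complements'' is valid for finite-dimensional spaces under a perfect pairing, which is standard.
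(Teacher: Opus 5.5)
Your proposal is correct and follows the paper's proof. Both arguments use the perfect pairing to get $\dim ({}^\bot g)_m = \deg g$ whenever $\deg g \le m$, and then conclude that the sum in \eqref{eq:sumofVS} is direct because the dimensions add up to $n$. Your remark that multiplication by $g$ is injective supplies the linear independence of $x^{m-n}f, \dots, y^{m-n}f$, which the paper uses without stating.
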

\begin{proof}
The space $({}^\bot f)_m$ is explicitly described as
\[
	({}^\bot f)_m = \{ w \in V_m \mid (w, x^{m-n}f) = (w, x^{m-n-1}y f) = \dots = (w, y^{m-n}f) = 0\}.
\]
Since the pairing $(\cdot, \cdot) \colon V_m \times V^*_m \to \FF_q$ is perfect and $n \le m$, we have
\[
	\dim ({}^\bot f)_m = n = \deg f.
\]
Because similar arguments can be applied to $f_i^{e_i} (1 \le i \le r)$,
we see that
\[
	\dim ({}^\bot f)_m = \deg f = \sum_{1 \le i \le r} \deg f_i^{e_i} = \sum_{1 \le i \le r} \dim ({}^\bot f_i^{e_i})_m,
\]
and the right hand side of \eqref{eq:sumofVS} is the direct sum.
\end{proof}

\subsection{Waring types on binary quintics}
We apply the results in the previous subsections to binary quintics.
Let $\tau \in \Gal(\overline{\FF_q}/\FF_q)$ be the Frobenius automorphism.
By abuse of notation, we also use $\tau$ to denote the action 
$1 \otimes \tau \colon V^*_\bullet \otimes \overline{\FF_q} \to V^*_\bullet \otimes \overline{\FF_q}$.
We classify the elements $w \in V_5$ by their Waring types and catalecticant ranks.
\begin{enumerate}[{(1) }]
	\item Catalecticant rank 0: the unique Waring type is $\emptyset$, which implies that $w=0$.
	\item Catalecticant rank 1: the unique Waring type is $\langle 1 \rangle$.
	In this case, there is a linear form $f_1 \in V^*_1$ apolar to $w$; equivalently, $w \in ({}^\bot f_1)_5$.
    \item Catalecticant rank 2: There is a quadric $f \in (w^\bot)_2$ unique up to scalar. There are three subcases:
    \begin{enumerate}[{(a) }]
	   \item Waring type $\langle 1^2 \rangle$.  We may assume that $f = f_1^2$ for a linear form $f_1 \in V^*_1$, and $w \in ({}^\bot f_1^2)_5$.
	   \item Waring type $\langle 1, 1 \rangle$. There are linearly independent linear forms $f_{1,1}, f_{1,2} \in V^*_1$ with $f = f_{1,1}f_{1,2}$.
	   The element is $w \in ({}^\bot f_{1,1}f_{1,2})_5 = ({}^\bot f_{1,1})_5 + ({}^\bot f_{1,2})_5$.
	   \item Waring type $\langle 2 \rangle$.  
	   There is a linear form $f_1 \in V^*_1 \otimes \FF_{q^2} \setminus V^*_1$ with $f = f_1 \tau(f_1)$.
	   The element is $w \in ({}^\bot f_1 \tau(f_1))_5 = ({}^\bot f_1)_5 + ({}^\bot \tau(f_1))_5$.
    \end{enumerate}
    \item Catalecticant rank 3: There is a nonzero cubic $f_3 \in (w^\bot)_3$ unique up to scalar.
    By Theorem \ref{thm:CI}, we may assume that $f_3 = \Cat_{2,2}(x,y; w)$, and 
    the Waring type of $w$ is the splitting type of $\Cat_{2,2}(x,y; w)$.  There are five subcases:
	\begin{enumerate}[{(a) }]
        \item Waring type $\langle 1^3 \rangle$. 
		We may assume that $f_3 = f_1^3$ for a linear form $f_1 \in V^*_1$, and $w \in ({}^\bot f_1^3)_5$.
	\item Waring type $\langle 1^2, 1 \rangle$.  We can write $f_3 = f_{1,1}^2f_{1,2}$ for linearly independent linear forms
		$f_{1,1}, f_{1,2} \in V_1^*$, and $w \in ({}^\bot f_{1,1}^2)_5 + ({}^\bot f_{1,2})_5$.
	\item Waring type $\langle 1, 1, 1 \rangle$.  We can write $f_3 = f_{1,1} f_{1,2} f_{1,3}$ 
		for pairwise linearly independent $f_{1,1}, f_{1,2}, f_{1,3} \in V^*_1$.
		Then $w \in ({}^\bot f_{1,1})_5 + ({}^\bot f_{1,2})_5 + ({}^\bot f_{1,3})_5$.
        \item Waring type $\langle 2, 1 \rangle$.  
        \item Waring type $\langle 3 \rangle$.  
    \end{enumerate}
\end{enumerate}

\section{Exponential sums on singular binary quintics}\label{sct:Quintics}
In this section, we apply the results of the previous section to binary quintics
and obtain our main result.  Throughout this section, we assume that $\ch \FF_q = p > 2$; the characteristic-two case is treated in the appendix.

\subsection{Contraction of the exponential sum}

As in \cite{ITTX24}, we reduce the calculation of the sum to counting $\FF_q$-rational points on schemes (see \cite[pp.\ 136--137]{FK}).
Since $\Phi$ is invariant under scalar multiplication, we divide the sum into the terms with $f = 0$ and $f \neq 0$, 
and then decompose the latter as a sum over $[f] \in \PP(V^*_n)$ and $f \in [f]$:

\[
	\begin{aligned}
		&\phantom{=} \sum_{f \in V_n^*} \Phi(f) \psi((w,f))
		= 1 + \sum_{[f] \in \PP(V_n^*)} \Phi([f]) \sum_{g \in [f]} \psi((w,g)).
	\end{aligned}
\]
Since for $0 \neq f \in V_n^*$,
\[
	\sum_{g \in [f]} \psi((w,g)) = \begin{cases}
		q-1 & (\text{if $(w,f) = 0$}) \\
		-1 & (\text{if $(w, f) \neq 0$}),
	\end{cases}
\]
we have
\[
	\begin{aligned}
		&\phantom{=} 1 + \sum_{[f] \in \PP(V_n^*)} \Phi([f]) \sum_{g \in [f]} \psi((w, g)) \\
		&= 1 + q \sum_{\substack{[f] \in \PP(V_n^*) \\ \Disc(f) = (w, f) = 0}}1 - \sum_{\substack{[f] \in \PP(V_n^*) \\ \Disc(f) = 0}} 1.
	\end{aligned}
\]
Putting
\[
	N_w \coloneqq \# \{[f] \in \PP(V_n^*) \mid \Disc(f) = (w, f) = 0\}
\]
for $w \in V_n$, we obtain
\[
	q^6\widehat{\Phi}(w) = 1 + qN_w - N_0.
\]
Thus we concentrate on the calculation of $N_w$ for $w \in V_5$.

\subsection{Geometric decomposition in the case of binary quintics}
The three relevant morphisms are
\[
    \begin{aligned}
        &\psi_{1^2, 3} \colon \PP(V_1^*) \times \PP(V_3^*) \to \PP(V_5^*) & &;& & ([f_1], [f_3]) \mapsto [f_1^2 f_3], \\
        &\psi_{2^2, 1} \colon \PP(V_2^*) \times \PP(V_1^*) \to \PP(V_5^*) & &;& &([f_2], [f_1]) \mapsto [f_2^2 f_1], \\
        &\psi_{1^2, 1^2, 1} \colon \PP(V_1^*) \times \PP(V_1^*) \times \PP(V_1^*) \to \PP(V_5^*) & &;& &([f_{1,1}], [f_{1,2}], [f_{1,3}]) \mapsto [f_{1,1}^2 f_{1,2}^2 f_{1,3}].
    \end{aligned}
\]
As in the quartic case \cite[(15)]{ITTX24}, we have the following:
\begin{prop}
For each $[f] \in \PP(V_5^*)$, we have
\[
   \bm{1}_{\Disc}([f]) = \# \psi_{1^2, 3}^{-1}([f]) - \# \psi_{1^2, 1^2, 1}^{-1}([f]) + \# \psi_{2^2, 1}^{-1}([f]).
\]
\end{prop}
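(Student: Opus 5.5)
The plan is to verify the identity by a case analysis over the splitting type of a nonzero quintic $f$. Since both sides depend only on $[f]$, I first compute each fiber cardinality in terms of the factorization $f = c\prod f_i^{e_i}$.

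A point $([f_1],[f_3])$ lies in $\psi_{1^2,3}^{-1}([f])$ exactly when $f_1$ is an $\FF_q$-rational linear factor with $f_1^2 \mid f$. Then $[f_3] = [f/f_1^2]$ is determined. Hence the first fiber has $A$ elements, where $A$ is the number of distinct rational linear factors $f_i$ with $e_i \ge 2$.

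For $\psi_{2^2,1}$, a point $([f_2],[f_1])$ requires $f_2^2 \mid f$ with $f_2$ a quadric over $\FF_q$, and then $f_1 = f/f_2^2$ is determined. I will use unique factorization in $\FF_q[x,y]$ to list the $[f_2]$ with $f_2^2\mid f$. These are:
\begin{itemize}
\item $f_2$ an irreducible quadric with $e\ge 2$;
\item $f_2=\ell^2$ with $\ell$ rational and $e_\ell\ge4$;
\item $f_2=\ell\ell'$ with $\ell,\ell'$ distinct rational and $e_\ell,e_{\ell'}\ge2$.
\end{itemize}
For $\psi_{1^2,1^2,1}$, an ordered triple with $f_{1,1}^2f_{1,2}^2 \mid f$ is needed. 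This covers ordered pairs of distinct rational linear factors with $e\ge2$ each, together with the diagonal pairs $f_{1,1}=f_{1,2}=\ell$ with $e_\ell\ge4$. In each case the third factor is determined.

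Next I run through the splitting types of degree $5$. If $f$ is squarefree, all three counts are $0$, matching $\bm 1_{\Disc}=0$. For the singular types I tabulate $(A, B, C)$, where $B = \#\psi_{1^2,1^2,1}^{-1}$ and $C = \#\psi_{2^2,1}^{-1}$:
\begin{itemize}
\item types with exactly one rational linear factor $\ell$ with $e_\ell\in\{2,3\}$ and no other repeated factor, namely $\langle1^2,\ast\rangle$ and $\langle1^3,\ast\rangle$: $(1,0,0)$;
\item $\langle 1^4,1\rangle$ and $\langle1^5\rangle$: $A=1$, $B=1$ (diagonal), $C=1$ ($f_2=\ell^2$);
\item $\langle 1^2,1^2,1\rangle$ and $\langle1^3,1^2\rangle$: $A=2$, $B=2$ (two orderings), $C=1$ ($f_2=\ell\ell'$);
\item $\langle 2^2,1\rangle$: $(0,0,1)$.
\end{itemize}
In every case $A-B+C=1$. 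For completeness I will check that these exhaust all singular types of degree $5$. Any type with a repeated factor must have a repeated factor of degree $1$ or $2$, since $3\cdot 2>5$. The only degree-$2$ possibility is $\langle 2^2,1\rangle$.

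The main obstacle is bookkeeping, in particular the ordered versus unordered count in $\psi_{1^2,1^2,1}$ and the diagonal contributions when $e\ge4$. These must be matched against the two distinct ways the quadric $f_2$ can arise in $\psi_{2^2,1}$. I will handle this by fixing the convention that fibers are counted as sets of $\FF_q$-points of the source product, which the morphisms as written require. I will then verify the two rows $\langle1^4,1\rangle$/$\langle1^5\rangle$ and $\langle1^3,1^2\rangle$ explicitly, since those are where the overlap between $e_i\ge2$ and $e_i\ge4$ conditions matters.
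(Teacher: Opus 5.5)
Your proposal is correct and matches the paper's proof. The paper also notes that the three fiber cardinalities depend only on the splitting type of $f$. It then checks $\#\psi_{1^2,3}^{-1}([f]) - \#\psi_{1^2,1^2,1}^{-1}([f]) + \#\psi_{2^2,1}^{-1}([f]) = \bm{1}_{\Disc}([f])$ with the same table of values $(1,0,0)$, $(1,1,1)$, $(2,2,1)$, $(0,0,1)$ over the same groups of types. Your unique-factorization description of the fibers and your exhaustiveness check for singular types make explicit what the paper's table leaves implicit.
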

\begin{proof}
    Note that each term only depends on the splitting type of $f$.
    The following table proves the formula directly.
    \begin{table}[h]
        \begin{tabular}{c||ccc|c}
            \hline
            splitting type of $[f]$ & $\#\psi^{-1}_{1^2, 3}([f])$ & $\#\psi^{-1}_{1^2, 1^2, 1}([f])$ & $\#\psi^{-1}_{2^2, 1}([f])$ & $\bm{1}_{\Disc}([f])$ \\
            \hline
            nonsingular & $0$ & $0$ & $0$ & $0$ \\
            \hline
            $\langle 1^2, 1, 1, 1\rangle$ & $1$ & $0$ & $0$ & $1$ \\ 
            $\langle 1^2, 2, 1\rangle$ & $1$ & $0$ & $0$ & $1$ \\ 
            $\langle 1^2, 3\rangle$ & $1$ & $0$ & $0$ & $1$ \\ 
            $\langle 1^3, 1, 1\rangle$ & $1$ & $0$ & $0$ & $1$ \\ 
            $\langle 1^3, 2\rangle$ & $1$ & $0$ & $0$ & $1$ \\ 
            \hline
            $\langle 1^2, 1^2, 1\rangle$ & $2$ & $2$ & $1$ & $1$ \\ 
            $\langle 1^3, 1^2\rangle$ & $2$ & $2$ & $1$ & $1$ \\ 
            \hline
            $\langle 2^2, 1\rangle$ & $0$ & $0$ & $1$ & $1$ \\ 
            \hline
            $\langle 1^4, 1\rangle$ & $1$ & $1$ & $1$ & $1$ \\ 
            $\langle 1^5\rangle$ & $1$ & $1$ & $1$ & $1$ \\ 
            \hline
        \end{tabular}
    \end{table}
\end{proof}
For simplicity, we write $[w^\bot]_5 \coloneqq \PP((w^\bot)_5) \subseteq \PP(V^*_5)$.
Summing up over $[f] \in [w^\bot]_5$,  we have the following formula:
\[
   N_w= \# \psi_{1^2, 3}^{-1}([w^\bot]_5) - \# \psi_{1^2, 1^2, 1}^{-1}([w^\bot]_5) + \# \psi^{-1}_{2^2, 1}([w^\bot]_5).
\]

\subsection{Preparation for counting the fibers of the three morphisms}
For the computation, we restate each term in terms of catalecticant matrices.
To do this, let us write
\(
	w = a_0(x^5)^* + a_1 (x^{4}y)^* + \dots + a_5 (y^5)^* \in V_5.
\)

\subsubsection{The morphism $\psi_{1^2, 3}$}
Fix coordinates as
\[
	\begin{aligned}
		f_1 &= s_0x + s_1y, \\
		f_3 &= t_0x^3 + t_1 x^2y + t_2 xy^2 + t_3 y^3.
	\end{aligned}
\]
Then the condition $(w, f_1^2 f_3) = 0$ requires a linear condition on $(t_0, t_1, t_2, t_3)$,
and the requirement is nontrivial unless
\begin{equation}\label{eq:113}
	(w, x^3 f_1^2) = (w, x^2y f_1^2) = (w, xy^2 f_1^2) = (w, y^3 f_1^2) = 0.
\end{equation}
The condition \eqref{eq:113} is equivalent to two conditions: (i) $w \in {}^\bot f_1^2$, or (ii) $f_1^2 \in w^\bot$.
\begin{lem}\label{lem:113}
    If $w \neq 0$ and there is a linear form $f_1$ with $f_1^2 \in w^\bot$, then
    it is unique up to scalar. 
    
    Moreover, such a linear form exists in exactly two cases:
    (1) $\rank(w) = 1$, or (2) $w$ has Waring type $\langle 1^2 \rangle$.
\end{lem}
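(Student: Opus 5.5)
The plan is to use the structure of the apolar ideal given by Theorem \ref{thm:CI}(i). For $0 \neq w \in V_5$ we write $w^\bot = \langle \varphi_s, \varphi_{7-s} \rangle$ with $s = \rank(w) \in \{1,2,3\}$ and $s < 7-s$. Suppose $f_1^2 \in w^\bot$ for some nonzero linear form $f_1$. Then $f_1^2$ is a degree $2$ element of this complete intersection ideal. Since $7-s \ge 4 > 2$, the generator $\varphi_{7-s}$ contributes nothing in degree $2$. Hence $(w^\bot)_2 = \varphi_s \cdot V^*_{2-s}$, and $f_1^2$ must be a multiple of $\varphi_s$. This forces $s \le 2$ and gives $\varphi_s \mid f_1^2$. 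We then split into the two possible values of $s$.

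\textbf{Case $s = 1$.} Here $\varphi_1$ is a linear form dividing $f_1^2$. By unique factorization, $\varphi_1$ is proportional to $f_1$, which gives uniqueness of $[f_1]$.

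\textbf{Case $s = 2$.} Here $(w^\bot)_2$ is one-dimensional, spanned by $\varphi_2$. Thus $f_1^2$ is proportional to $\varphi_2$, so $\varphi_2$ is a square and $w$ has Waring type $\langle 1^2 \rangle$. Moreover $[f_1]$ is determined by $[\varphi_2]$, since a square root of a binary quadric is unique up to scalar.

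\textbf{Case $s = 3$.} Here $(w^\bot)_2 = 0$, so no such $f_1$ exists. Together these three cases give the "only if" direction and the uniqueness claim.

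For the converse, the two cases are handled separately.
\begin{enumerate}[{(1) }]
\item If $\rank(w) = 1$, there is a linear form $f_1 \in (w^\bot)_1$. Since $w^\bot$ is an ideal, $f_1^2 \in w^\bot$.
\item If $w$ has Waring type $\langle 1^2 \rangle$, the minimal generator is by definition $\varphi_2 = c f_1^2$ with $f_1$ defined over $\FF_q$. Hence $f_1^2 \in w^\bot$.
\end{enumerate}

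The main point to check carefully is the degree-$2$ description $(w^\bot)_2 = \varphi_s V^*_{2-s}$. This follows directly from $\deg \varphi_{7-s} \ge 4$, because every element of degree $2$ in the ideal is then of the form $a\varphi_s + b\varphi_{7-s}$ with $b$ of negative degree, that is $b = 0$. No characteristic restriction enters, because Theorem \ref{thm:CI}(i) is characteristic-free.

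We should also make sure that uniqueness is claimed in the correct sense, namely as a point $[f_1] \in \PP(V_1^*)$. In the rank $1$ case this uses that $\varphi_1$ is unique up to scalar. That holds because $(w^\bot)_1 = \varphi_1 \cdot V_0^*$ is one-dimensional.

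Finally, if one prefers to avoid the complete intersection theorem in the rank $1$ case, an alternative check is available. A direct computation with the example describing $({}^\bot \ell)_5$ shows that $w$ is a multiple of a single power $\ell^{[5]}$, so $(w^\bot)_2$ consists exactly of the multiples of $\ell^\perp$ by linear forms. The only squares among these are multiples of $(\ell^\perp)^2$.
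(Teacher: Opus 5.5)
Your proposal is correct and follows the paper's proof. Both arguments use Theorem \ref{thm:CI}(i) to see that $(w^\bot)_2 = \varphi_s V^*_{2-s}$, because $\deg\varphi_{7-s}\ge 4$, and then split into the cases $\rank(w)=1,2,3$; your explicit check of the converse direction (existence of $f_1$ in the two listed cases) is a small completion that the paper leaves implicit.
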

\begin{proof}
    By Theorem \ref{thm:CI}, when $w \neq 0$, the apolar ideal $w^\bot \subseteq \FF_q[x,y]$ is 
    the complete intersection ideal generated by $\varphi_{\rank(w)}$ of degree $\rank(w) \le 3$ 
    and $\varphi_{7 - \rank(w) }$ of degree $7-\rank(w)$.
    
    When $\rank(w) = 3$, we have $(w^\bot)_2 = 0$, so there is no nonzero $f_1$ with $f_1^2 \in w^\bot$.  When $\rank(w) = 2$, such a linear form $f_1$ 
    satisfies $f_1^2 \in \langle \varphi_{2}, \varphi_{5} \rangle$, which implies that $f_1^2$ and $\varphi_{2}$ coincide up to scalar.  This is case (2).
    When $\rank(w) = 1$, we find that $f_1^2$ is divisible by $\varphi_1$, and hence $f_1$ coincides with $\varphi_1$ up to scalar.  This is case (1).
\end{proof}

The number $\#\psi^{-1}_{1^2, 3}([w^\bot]_5)$ is calculated as follows: 
\begin{enumerate}[{(i) }]
    \item For any $w \in V_5$ and $[f_1] \in \PP(V_1^*)$, there are at least $q^2 + q + 1 = \#\PP^2(\FF_q)$ choices for $[f_3] \in \PP(V_3^*)$.  
    \item For each $[f_1] \in \PP(V_1^*)$ with $f_1^2 \in w^\bot$, there are an additional $q^3 = \#\PP^3(\FF_q) - \# \PP^2(\FF_q)$ choices for $[f_3] \in \PP(V_3^*)$.
\end{enumerate}
By Lemma \ref{lem:113}, when $w \neq 0$, the linear form $f_1 \in V_1^*$ with $f_1^2 \in w^\bot$ is unique if it exists.  When $w=0$, 
any linear form $f_1 \in V_1^*$ satisfies $f_1^2 \in (0^\bot)_2 = V^*_2$.
When we define
\[
    \begin{aligned}
    V_{1^2, 3:w} &\coloneqq \{[f_1] \in \PP(V^*_1) \mid f_1^2 \in w^\bot\}, \\
    \widetilde{N}_{1^2, 3:w} &\coloneqq \# V_{1^2, 3:w},
    \end{aligned}
\]
our arguments show the following.

\begin{prop}\label{prop:First}
    We have
	\[
		\# \psi_{1^2, 3}^{-1}([w^\bot]_5) = (q+1)(q^2 + q + 1) + q^3\widetilde{N}_{1^2, 3:w},
    \]
    where the last term is
    \[
        \widetilde{N}_{1^2, 3:w} = \begin{cases}
			q+1 & (w = 0) \\
			1 & (\text{$0 \neq w \in {}^\bot f_1^2$ for some $f_1 \in V^*_1$}) \\
			0 & (\text{otherwise}).
		\end{cases}
	\]
\end{prop}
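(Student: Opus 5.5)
We count the fiber by fixing the first coordinate. The preimage $\psi_{1^2,3}^{-1}([w^\bot]_5)$ is the set of pairs $([f_1],[f_3]) \in \PP(V_1^*) \times \PP(V_3^*)$ with $(w, f_1^2 f_3) = 0$. So we write
\[
    \# \psi_{1^2, 3}^{-1}([w^\bot]_5) = \sum_{[f_1] \in \PP(V_1^*)} \# \{ [f_3] \in \PP(V_3^*) \mid (w, f_1^2 f_3) = 0 \}
\]
and evaluate each inner count. For a fixed $f_1 \neq 0$, the map $f_3 \mapsto (w, f_1^2 f_3) = ((w, f_1^2), f_3)$ is a linear functional on $V_3^*$. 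Here $(w, f_1^2) \in V_3$ is the polar pairing. This functional is zero exactly when \eqref{eq:113} holds, that is, when $f_1^2 \in w^\bot$. In that case all $\#\PP^3(\FF_q) = q^3+q^2+q+1$ points $[f_3]$ lie in the fiber. Otherwise the kernel is a hyperplane, contributing $\#\PP^2(\FF_q) = q^2+q+1$ points.

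Summing over the $q+1$ points of $\PP(V_1^*)$ gives
\[
    (q+1)(q^2+q+1) + q^3 \cdot \#\{[f_1] \mid f_1^2 \in w^\bot\} = (q+1)(q^2+q+1) + q^3 \widetilde{N}_{1^2,3:w},
\]
which is the first claim. The condition $f_1^2 \in w^\bot$ depends only on $[f_1]$, since $w^\bot$ is a homogeneous ideal. Hence $V_{1^2,3:w}$ is well defined.

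It remains to evaluate $\widetilde{N}_{1^2,3:w}$. If $w = 0$, then $w^\bot = V^*_\bullet$, so every $[f_1]$ qualifies and $\widetilde{N}_{1^2,3:w} = q+1$. If $w \neq 0$, Lemma \ref{lem:113} shows that a qualifying $[f_1]$, if it exists, is unique, so $\widetilde{N}_{1^2,3:w} \in \{0,1\}$. To see that it equals $1$ exactly when $w \in {}^\bot f_1^2$ for some $f_1 \in V_1^*$, we use the equivalence noted before Lemma \ref{lem:113}. By definition of the polar pairing, $(w, f_1^2) = 0$ in $V_3$ means $(w, f_1^2 g) = 0$ for all $g \in V_3^*$. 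This says both that $f_1^2 \in w^\bot$ and that $w \in ({}^\bot f_1^2)_5$.

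The only point needing care is rationality: the $f_1$ in the statement must lie in $V_1^*$ over $\FF_q$, not over an extension. We handle this directly, since we count $\FF_q$-points of $\PP(V_1^*)$ from the start. Otherwise the argument is a direct combination of the linear-algebra count with Lemma \ref{lem:113}, and we expect no serious obstacle.
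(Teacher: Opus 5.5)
Your proposal is correct and takes essentially the same route as the paper. For each $[f_1]\in\PP(V_1^*)$, the condition $(w,f_1^2f_3)=0$ is either a hyperplane condition on $f_3$ (giving $q^2+q+1$ points) or vacuous exactly when $f_1^2\in w^\bot$ (giving an extra $q^3$), and you then evaluate $\widetilde{N}_{1^2,3:w}$ via $0^\bot=V^*_\bullet$ for $w=0$ and the uniqueness in Lemma \ref{lem:113} for $w\neq 0$, just as the paper does.
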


\subsubsection{The morphism $\psi_{2^2, 1}$}\label{sct:2^21}
Fix coordinates as
\[
	\begin{aligned}
		f_2 &= s_0x^2 + s_1xy + s_2y^2, \\
		f_1 &= t_0x + t_1y.
	\end{aligned}
\]
Then the condition $(w, f_2^2 f_1) = 0$ requires a linear condition on $(t_0, t_1)$,
and the requirement is nontrivial unless
\begin{equation}\label{eq:221}
	(w, x f_2^2) = (w, yf_2^2) = 0.
\end{equation}
This is equivalent to (1) $w \in {}^\bot f_2^2$, or (2) $f_2^2 \in w^\bot$.
In terms of catalecticant matrices, it is stated as
\begin{equation}\label{eq:21}
	\begin{aligned}
	&\begin{pmatrix}
		s_0 & s_1 & s_2
	\end{pmatrix}
	\begin{pmatrix}
		a_0 & a_1 & a_2 \\
		a_1 & a_2 & a_3 \\
		a_2 & a_3 & a_4
	\end{pmatrix}
	\begin{pmatrix}
		s_0 \\ s_1 \\ s_2
	\end{pmatrix} \\
	= 
	&\begin{pmatrix}
		s_0 & s_1 & s_2
	\end{pmatrix}
	\begin{pmatrix}
		a_1 & a_2 & a_3 \\
		a_2 & a_3 & a_4 \\
		a_3 & a_4 & a_5
	\end{pmatrix}
	\begin{pmatrix}
		s_0 \\ s_1 \\ s_2
	\end{pmatrix} = 0
	\end{aligned}
\end{equation}
The number $\#\psi^{-1}_{2^2, 1}([w^\bot]_5)$ is calculated as follows: 
\begin{enumerate}[{(i) }]
    \item For any $w$ and $[f_2] \in \PP(V_2^*)$, there is one choice
for $[f_1] \in \PP(V_1^*)$.  
    \item For each $f_2$ with $w \in {}^\bot f_2^2$, there are an additional $q$ choices for $[f_1] \in \PP(V^*_1)$.
\end{enumerate}
Let $V_{2^2, 1:w}, \widetilde{N}_{2^2, 1: w}$ be 
\[
    \begin{aligned}
    V_{2^2, 1:w} &\coloneqq \{[f_2] \in \PP(V^*_2) \mid f_2^2 \in w^\bot\}, \\
    \widetilde{N}_{2^2, 1:w} &\coloneqq \# V_{2^2, 1:w}.
    \end{aligned}
\]
We compute
\[
	\# \psi_{2^2, 1}^{-1}([w^\bot]_5 ) = (q^2 + q + 1) + q \widetilde{N}_{2^2, 1: w}.
\]

\subsubsection{The morphism $\psi_{1^2, 1^2, 1}$}
Fix coordinates as
\[
	\begin{aligned}
		f_{1,1} &= s_0x + s_1y, \\
		f_{1,2} &= t_0x + t_1y, \\
		f_{1,3} &= u_0x + u_1y.
	\end{aligned}
\]
Then the condition $(w, f_{1,1}^2 f_{1,2}^2 f_{1,3}) = 0$ requires a linear condition on $(u_0, u_1)$,
and the requirement is nontrivial unless
\[
	(w, x f_{1,1}^2 f_{1,2}^2) = (w, yf_{1,1}^2 f_{1,2}^2) = 0,
\]
or equivalently, $w \in {}^\bot f_{1,1}^2 f_{1,2}^2$ or $f_{1,1}^2 f_{1,2}^2 \in w^\bot$. 
In terms of catalecticant matrices, it is stated as
\begin{equation}\label{eq:111}
	\begin{aligned}
	&\begin{pmatrix}
		s_0^2 & 2s_0s_1 & s_1^2
	\end{pmatrix}
	\begin{pmatrix}
		a_0 & a_1 & a_2 \\
		a_1 & a_2 & a_3 \\
		a_2 & a_3 & a_4
	\end{pmatrix}
	\begin{pmatrix}
		t_0^2 \\ 2t_0t_1 \\ t_1^2
	\end{pmatrix} \\
	= 
	&\begin{pmatrix}
		s_0^2 & 2s_0s_1 & s_1^2
	\end{pmatrix}
	\begin{pmatrix}
		a_1 & a_2 & a_3 \\
		a_2 & a_3 & a_4 \\
		a_3 & a_4 & a_5
	\end{pmatrix}
	\begin{pmatrix}
		t_0^2 \\ 2t_0t_1 \\ t_1^2
	\end{pmatrix} = 0
	\end{aligned}
\end{equation}
Let $V_{1^2, 1^2, 1:w}, \widetilde{N}_{1^2, 1^2, 1: w}$ be 
\[
    \begin{aligned}
    V_{1^2, 1^2, 1:w} &\coloneqq \{([f_{1,1}], [f_{1,2}]) \in \PP(V^*_1) \times \PP(V^*_1) \mid f_{1,1}^2f_{1,2}^2 \in w^\bot\}, \\
    \widetilde{N}_{1^2, 1^2, 1:w} &\coloneqq \# V_{1^2, 1^2, 1:w}.
    \end{aligned}
\]
Then we compute
\[
	\# \psi_{1^2, 1^2, 1}^{-1}([w^\bot]_5) = (q+ 1)^2 + q\widetilde{N}_{1^2,1^2,1:w}.
\]

\subsubsection{Difference of $\widetilde{N}_{2^2,1:w}$ and $\widetilde{N}_{1^2,1^2,1:w}$}
In the exponential sum, we compute the difference
\[
    \# \psi_{2^2, 1}^{-1}([w^\bot]_5) - \# \psi_{1^2, 1^2, 1}^{-1}([w^\bot]_5) = q(\widetilde{N}_{2^2,1:w} - \widetilde{N}_{1^2,1^2,1:w} -1).
\]
We can rewrite $\widetilde{N}_{2^2,1:w} - \widetilde{N}_{1^2,1^2,1:w}$ by a weighted count.

Recall the function $\varepsilon \colon \PP(V_2^*) \to \CC$ from the Introduction.
The number of elements in the fiber of the product map $\psi_{1,1} \colon \PP(V^*_1) \times \PP(V^*_1) \to \PP(V^*_2)$ is described by
\[
    \begin{aligned}
    \# \psi^{-1}_{1, 1}([f_2]) &= \#\{([f_{1,1}], [f_{1,2}]) \mid f_2 = f_{1,1}f_{1,2}\} \\
    &=
     1 +\varepsilon(f_2).
    \end{aligned}
\]
This gives us the following description:
\begin{equation}\label{eq:Sum}
    \begin{aligned}
         \widetilde{N}_{2^2,1:w} - \widetilde{N}_{1^2,1^2,1:w}
        &= \sum_{\substack{[f_2] \in \PP(V^*_2) \\ f_2^2 \in w^\bot}} 1 - 
        \sum_{\substack{[f_2] \in \PP(V_2^*) \\ f_2^2 \in w^\bot}} \left( 1 + \varepsilon(f_2)\right) \\
        &= - \sum_{\substack{[f_2] \in \PP(V^*_2) \\ f_2^2 \in w^\bot}} \varepsilon(f_2).
    \end{aligned}
\end{equation}
We will use this description to obtain the estimate in Theorem \ref{thm:Quint}.

\subsection{The case $w=0$}
The equations \eqref{eq:21} and \eqref{eq:111} pose no condition on
$[f_2] \in \PP(V^*_2)$ and $([f_{1,1}], [f_{1,2}]) \in \PP(V^*_1) \times \PP(V^*_1)$.
We have
\[
	\begin{aligned}
		\# \psi_{2^2, 1}^{-1}([0^\bot]_5) &= (q^2 + q + 1)(q+1), \\
		\# \psi_{1^2, 1^2, 1}^{-1}([0^\bot]_5) &= (q+1)^3.
	\end{aligned}
\]
This also implies
\[
    \begin{aligned}
        \widetilde{N}_{2^2,1:w} &= q^2 + q + 1, \\
        \widetilde{N}_{1^2,1^2,1:w} &= (q+1)^2.
    \end{aligned}
\]

\subsection{Catalecticant rank 1 case}
In this case, the ideal $w^\bot$ is generated by a linear form $f_1 \in V^*_1$ and
a binary sextic form.
This implies 
\(
    (w^\bot)_4 = f_1 V^*_3,
\)
and 
\[
    \{g_2 \in V^*_2 \mid g_2^2 \in w^\bot\} = f_1 V^*_1.
\]
In the space, there is only one quadric of type $\langle 1^2 \rangle$, $q$ quadrics of type $\langle 1, 1 \rangle$ and no quadrics of type $\langle 2 \rangle$.
This concludes
\[
	\begin{aligned}
		\widetilde{N}_{2^2, 1: w} &= q + 1, \\
		\widetilde{N}_{1^2, 1^2, 1: w} &= 2q + 1.
	\end{aligned}
\]

\subsection{Catalecticant rank 2 cases}
In these cases, the ideal $w^\bot$ is generated by a quadratic form $f_2 \in V^*_2$ and
a binary quintic form.  This implies 
\(
    (w^\bot)_4 = f_2 V^*_2.
\)

\subsubsection{The case of Waring type $\langle 1^2 \rangle$}
In this case, we may assume that $f_2 = f_1^2$ for a form $f_1 \in V^*_1$,
and then
\[
    \{g \in V^*_2 \mid g^2 \in w^\bot\} = f_1 V^*_1.
\]
We have the same conclusion as in the catalecticant-rank-one case:
\[
	\begin{aligned}
		\widetilde{N}_{2^2, 1: w}&= q + 1, \\
		\widetilde{N}_{1^2, 1^2, 1:w} &= 2q + 1.
	\end{aligned}
\]

\subsubsection{The case of Waring type $\langle 1, 1 \rangle$}
There are linearly independent linear forms $f_{1,1}, f_{1,2} \in V^*_1$ 
with $f_2 = f_{1,1}f_{1,2}$.  This shows
\[
    \{g \in V^*_2 \mid g^2 \in w^\bot\} = \FF_q f_{1,1} f_{1,2}.
\]
We have 
\[
	\begin{aligned}
		\widetilde{N}_{2^2, 1: w} &= 1, \\
		\widetilde{N}_{1^2, 1^2, 1: w} &= 2.
	\end{aligned}
\]

\subsubsection{The case of Waring type $\langle 2 \rangle$}
In this case the quadric $f_2 \in V^*_2$ is irreducible.
This shows
\[
    \{g \in V^*_2 \mid g^2 \in w^\bot\} = \FF_q f_2,
\]
and we have 
\[
	\begin{aligned}
		\widetilde{N}_{2^2, 1: w} &= 1, \\
		\widetilde{N}_{1^2, 1^2, 1: w} &= 0.
	\end{aligned}
\]

\subsection{Catalecticant rank 3 cases}
In these cases, it is enough to estimate the difference 
\(
    \widetilde{N}_{2^2, 1:w} - \widetilde{N}_{1^2, 1^2, 1:w}.
\)

\subsubsection{The case of Waring type $\langle 1^3 \rangle$}\label{sect:1^3}
In this case, we have $\Cat_{2,2}(x,y;w) = c f_1^3$ for a constant $c \in \FF_q^\times$ and a linear form $f_1 \in V_1^*$.
After a change of coordinates sending $f_1$ to $y$, we may assume $w \in {}^\bot y^3$.
Then we can write
\[
	w(x,y) = a_0(x^5)^* + a_1(x^4y)^*+ a_2(x^3y^2)^*
\]
with $a_2 \neq 0$.
The equation \eqref{eq:21} becomes
\begin{equation}\label{eq:21triple}
	a_0s_0^2 + 2a_1s_0s_1 + a_2(2s_0s_2 + s_1^2) = a_1s_0^2 + 2a_2s_0s_1 = 0.
\end{equation}
Since we assumed that $\ch \FF_q \neq 2$, there are two geometric points in $\PP(V^*_2)$ satisfying this.  If $s_0 = 0$, then we have
\[
    (s_0: s_1: s_2) = (0: 0: 1),
\]
and this point corresponds to the singular quadratic $y^2$.
If $s_0 \neq 0$, we have
\[
    \begin{aligned}
        (s_0 : s_1: s_2) = (8a_2^2: -4a_1a_2: 3a_1^2 - 4a_0a_2).
    \end{aligned}
\]
The binary quadratic $F$ corresponding to this point can be singular or not, depending on the coefficients $a_0, a_1, a_2$.  Thus, we have
\[
    - \sum_{\substack{[f_2] \in \PP(V^*_2) \\ f_2^2 \in w^\bot}} \varepsilon(f_2) = - \varepsilon(F) \in \{0, \pm 1\}.
\]

\subsubsection{The case of Waring type $\langle 1^2, 1 \rangle$}\label{sect:1^21}
By changing coordinates, we may assume that $w$ is annihilated by $xy^2$ and
\[
    w(x,y) = a_0(x^5)^* + a_1(x^4y)^* + b(y^5)^*
\]
with $a_1b \neq 0$.
The equation \eqref{eq:21} is written as
\begin{equation}\label{eq:21double}
	a_0s_0^2 + 2a_1s_0s_1 = a_1s_0^2 + bs_2^2 = 0.
\end{equation}
This defines three geometric points in $\PP(V^*_2)$, namely,
\[
    \begin{aligned}
	P_0 &= (0:1:0),\\
	P_+ &= \left( 2a_1b: -a_0b: 2a_1\sqrt{-a_1b} \right), \quad \\
	P_- &= \left( 2a_1b: -a_0b: -2a_1\sqrt{-a_1b} \right).
    \end{aligned}
\]
The first point, $P_0$, corresponds to the nonsingular form $xy$, and its contribution is
\[
    \varepsilon(xy) = 1.
\]
The other points, $P_+$ and $P_-$, need not be $\FF_q$-rational. Even when they are defined over $\FF_q$, the corresponding quadratic forms $F_+$ and $F_-$ may be split, inert, or ramified.
Thus, we have
\[
    - \sum_{\substack{[f_2] \in \PP(V^*_2) \\ f_2^2 \in w^\bot}} \varepsilon(f_2) \in \{0, \pm 1, -2, -3\}.
\]

\subsubsection{The other cases}\label{sect:other}
Since each summand is in $\{0, \pm 1\}$, 
we have
\[
    \Bigg| - \sum_{\substack{[f_2] \in \PP(V^*_2) \\ f_2^2 \in w^\bot}} \varepsilon(f_2) \Bigg| 
    \le  \sum_{\substack{[f_2] \in \PP(V^*_2) \\ f_2^2 \in w^\bot}} \left| \varepsilon(f_2) \right| \le \# \{[f_2] \in \PP(V_2^*) \mid f_2^2 \in w^\bot\}.
\]
Thus it is enough to count $[f_2] \in \PP(V^*_2)$ such that $f_2^2 \in w^\bot$.
We would like to show that 
\[
    \# \{[f_2] \in \PP(V_2^*)(\overline{\FF_q}) \mid f_2^2 \in w^\bot\} \le 4.
\]
Since we consider it geometrically, we may assume that $w$ is of Waring type $\langle 1, 1, 1\rangle$.
By changing coordinates, we only have to consider the case when $w$ is annihilated by $xy(x-y)$.
In this case, we write
\[
    w(x,y) = a(x^5)^* + b(y^5)^* + c((x^5)^* + (x^4y)^* + (x^3y^2)^* + (x^2y^3)^* + (xy^4)^* + (y^5)^*)
\]
with $abc \neq 0$.
The equation \eqref{eq:21} becomes
\[
	\begin{aligned}
	    as_0^2 - c(s_0 + s_1 + s_2)^2 = bs_2^2 - c(s_0 + s_1 + s_2)^2 = 0.
	\end{aligned}
\]
Since these equations have no common irreducible components, they define a complete intersection of conics in $\PP(V^*_2)$ and hence have at most four geometric solutions.  Thus we have
\[
    - \sum_{\substack{[f_2] \in \PP(V^*_2) \\ f_2^2 \in w^\bot}} \varepsilon(f_2) \in \{0, \pm 1, \pm 2, \pm 3, \pm 4\}.
\]

\subsection{Conclusion}
Now we summarize our computation.
\[
    \begin{aligned}
        \# \psi_{1^2, 3}^{-1}([w^\bot]_5) &= (q+1)(q^2 + q + 1) + q^3 \widetilde{N}_{1,w} \\
        \# \psi_{2^2, 1}^{-1}([w^\bot]_5) &= (q^2 + q + 1) + q \widetilde{N}_{2,w} \\
        \# \psi_{1^2, 1^2, 1}^{-1}([w^\bot]_5) &= (q+1)^2 + q \widetilde{N}_{3,w}
    \end{aligned}
\]
For $\widetilde{N}_{i, w} (1 \le i \le 3)$, we have the following table:
\begin{table}[h]
\begin{tabular}{c||c|c|c}
    Waring type of $w$ & $\widetilde{N}_{1^2, 3:w}$ & $\widetilde{N}_{2^2, 1:w}$ & $\widetilde{N}_{1^2, 1^2, 1:w}$ \\ \hline
    $\emptyset$ & $q+1$ & $q^2 + q + 1$ & $(q+1)^2$ \\ \hline
    $\langle 1 \rangle$ & $1$ & $q+1$ & $2q+1$ \\ \hline
    $\langle 1^2 \rangle$ & $1$ & $q+1$ & $2q+1$ \\ 
    $\langle 1, 1 \rangle$ & $0$ & $1$ & $2$ \\ 
    $\langle 2 \rangle$ & $0$ & $1$ & $0$ \\ 
    others & $0$ & $\widetilde{N}_{2^2, 1:w}$ & $\widetilde{N}_{1^2, 1^2, 1:w}$ \\\hline
\end{tabular}
\end{table}

In particular, we complete the computation of $N_w$:
\[
    \begin{aligned}
        N_0 &= q^4 + 2q^3 + q^2 + q + 1, \\
        N_w &= q^3 + 2q^2 + q + 1 + q^3 \widetilde{N}_{1^2,3:w} 
        + q (\widetilde{N}_{2^2, 1:w} - \widetilde{N}_{1^2, 1^2, 1: w}).
    \end{aligned}
\]
Hence we have
\[
    \begin{aligned}
        q^{6}\widehat{\Phi}(w) &= 1 + qN_w - N_0 \\
        &= 1 + q(q^3 + 2q^2 + q + 1) + q^4 \widetilde{N}_{1^2, 3:w} \\
        &\quad
        + q^2 (\widetilde{N}_{2^2, 1:w} - \widetilde{N}_{1^2, 1^2, 1: w}) - (q^4 + 2q^3 + q^2 + q + 1) \\
        &= q^4 \widetilde{N}_{1^2, 3:w} + q^2 (\widetilde{N}_{2^2, 1:w} - \widetilde{N}_{1^2, 1^2, 1:w}).
    \end{aligned}
\]
Using our estimate on the catalecticant rank 3 cases, we obtain the main result:

\begin{thm}\label{thm:ExpQuint}
	Let $V_5^*$ be the space of binary quintic forms over $\FF_q$ with $\ch \FF_q = p > 2$, and $w \in V_5 = (V_5^*)^*$. Then,
	\[
		\widehat{\Phi}(w) = \begin{cases}
			q^{-1} + q^{-2} - q^{-3} & (w=0) \\
			q^{-2} - q^{-3} & ($w$: \text{Waring type $\langle 1 \rangle$ or $\langle 1^2 \rangle$}) \\
			C(w) q^{-4} & (\text{other cases}),
		\end{cases}
	\]
	where we have
	\[
		C(w) = -\sum_{\substack{f_2 \in \PP(V^*_2) \\ f_2^2 \in w^\bot}} \varepsilon(f_2) \in \begin{cases}
			\{-1\} & (\text{Waring type of $w$ is $\langle 1, 1 \rangle$}) \\
			\{1\} & (\text{Waring type of $w$ is $\langle 2 \rangle$}) \\
			\{0, \pm 1\} & (\text{Waring type of $w$ is $\langle 1^3 \rangle$}) \\
			\{0, \pm 1, -2, -3\} & (\text{Waring type of $w$ is $\langle 1^2, 1 \rangle$}) \\
			\{0, \pm 1, \pm 2, \pm 3, \pm 4\} & (\text{other Waring types}).
		\end{cases}
	\]
\end{thm}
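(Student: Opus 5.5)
The plan is to substitute the table of values of $\widetilde{N}_{1^2,3:w}$, $\widetilde{N}_{2^2,1:w}$, $\widetilde{N}_{1^2,1^2,1:w}$ into the identity
\[
q^{6}\widehat{\Phi}(w) = q^4 \widetilde{N}_{1^2, 3:w} + q^2 (\widetilde{N}_{2^2, 1:w} - \widetilde{N}_{1^2, 1^2, 1:w}),
\]
which holds for $w \neq 0$, and then divide by $q^6$. For $w=0$ this identity does not apply, since $N_0$ enters both terms. Instead we compute directly: $q^6\widehat{\Phi}(0) = 1 + (q-1)N_0$. Substituting $N_0 = q^4+2q^3+q^2+q+1$ gives $q^5+q^4-q^3$, which is $q^{-1}+q^{-2}-q^{-3}$ after dividing by $q^6$.

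For $w$ of Waring type $\langle 1\rangle$ or $\langle 1^2\rangle$, Proposition \ref{prop:First} together with Lemma \ref{lem:113} gives $\widetilde{N}_{1^2,3:w}=1$. The rank-one and rank-two computations give $\widetilde{N}_{2^2,1:w}-\widetilde{N}_{1^2,1^2,1:w} = (q+1)-(2q+1) = -q$. Hence $q^6\widehat\Phi(w) = q^4 - q^3$, which is the second line of the formula.

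In every other case $w\neq 0$ we have $\widetilde{N}_{1^2,3:w}=0$ by Lemma \ref{lem:113}, so $q^6\widehat\Phi(w) = q^2(\widetilde{N}_{2^2,1:w}-\widetilde{N}_{1^2,1^2,1:w})$. By \eqref{eq:Sum} this difference equals $-\sum \varepsilon(f_2)$ over $[f_2]$ with $f_2^2\in w^\bot$, which is the definition of $C(w)$. This yields $\widehat\Phi(w)=C(w)q^{-4}$. It remains to pin down the range of $C(w)$ for each Waring type.

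For type $\langle 1,1\rangle$ the only admissible point is $[f_{1,1}f_{1,2}]$, which is split, so $C(w)=-1$. For type $\langle 2\rangle$ the only admissible point is the irreducible quadric, which is inert, so $C(w)=1$. Both facts agree with the tabulated values $1-2$ and $1-0$. The types $\langle 1^3\rangle$ and $\langle 1^2,1\rangle$ are handled by \S\ref{sect:1^3} and \S\ref{sect:1^21}.

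In \S\ref{sect:1^21} one should double-check the listed range. The point $P_0$ always contributes $-1$. The points $P_\pm$ are either both rational or conjugate; if they are conjugate they contribute nothing. So the possible totals are $-1$ plus either $0$ or $-(\varepsilon(F_+)+\varepsilon(F_-))\in\{0,\pm1,\pm2\}$. This gives $\{-3,\dots,1\}$, consistent with the stated set $\{0,\pm1,-2,-3\}$.

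For the remaining types ($\langle 1,1,1\rangle$, $\langle 2,1\rangle$, $\langle 3\rangle$) we use \S\ref{sect:other}. After passing to $\overline{\FF_q}$ these all become type $\langle 1,1,1\rangle$, because the catalecticant covariant has three distinct roots. The defining system is then a pair of conics with no common component, so by Bézout there are at most $4$ points, each contributing a value in $\{0,\pm1\}$.

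The main obstacle I expect is justifying in \S\ref{sect:other} that the geometric reduction is legitimate. We must check that the $\overline{\FF_q}$-point count bounds the $\FF_q$-rational count, which is immediate. We must also verify that the two conics $as_0^2=c\ell^2$ and $bs_2^2=c\ell^2$, where $\ell=s_0+s_1+s_2$, share no component. This holds because each is a product of two distinct lines $\sqrt a s_0=\pm\sqrt c\,\ell$ and $\sqrt b s_2 = \pm\sqrt c\,\ell$, and no line of the first pair equals a line of the second pair, since $abc\neq0$ and $p>2$.
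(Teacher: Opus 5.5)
Your proposal is correct and is essentially the paper's own argument. You substitute the table of $\widetilde{N}$-values into $q^6\widehat{\Phi}(w)=q^4\widetilde{N}_{1^2,3:w}+q^2(\widetilde{N}_{2^2,1:w}-\widetilde{N}_{1^2,1^2,1:w})$, rewrite the difference via \eqref{eq:Sum}, and bound $C(w)$ using the case analyses of \S\ref{sect:1^3}, \S\ref{sect:1^21} and \S\ref{sect:other}.

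One minor correction: that identity also holds for $w=0$, since the $w=0$ row gives $q^4(q+1)-q^2\cdot q=q^5+q^4-q^3$. Your separate computation via $1+(q-1)N_0$ is therefore correct but not needed.
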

\begin{proof}[Proof of Theorem \ref{thm:Quint} if $\ch \FF_q > 2$]
Assume that $\Cat_{2,2}(w)$ does not vanish.
By Theorem \ref{thm:CI} (ii) and Theorem \ref{thm:ExpQuint}, we have
\[
    \widehat{\Phi}(w) = C(w)q^{-4}
\]
with $|C(w)| \le 4$.
\end{proof}

\subsection{Examples}
We give examples realizing each value of $C(w)$ in Theorem \ref{thm:ExpQuint}. This shows that the lists in Theorem \ref{thm:ExpQuint} are optimal.

\subsubsection{Waring type $\langle 1^3 \rangle$}
Over $\FF_7$, we have the following examples:
\begin{itemize}
    \item $C(w) = -1$: For $w = (x^4y)^* + (x^3y^2)^*$, the quadrics $f_2 \in V_{2^2, 1:w}$ are
    \[
        y^2, \quad 8x^2 - 4xy + 3y^2 = (x-y)(x-3y)
    \]
    This gives an example with $C(w) = -1$.
    \item $C(w) = 0$: For $w = -2(x^5)^* + (x^4y)^* + (x^3y^2)^*$, the computation in Subsubsection \ref{sect:1^3} gives the following quadrics $f_2 \in V_{2^2, 1:w}$:
    \[
        y^2, \quad 8x^2 - 4xy + 11y^2 = (x-2y)^2.
    \]
    This gives an example with $C(w) = 0$.
    \item $C(w) = 1$: For $w = (x^5)^* + (x^4y)^* + (x^3y^2)^*$, the quadrics $f_2 \in V_{2^2, 1:w}$ are
    \[
        y^2, \quad 8x^2 - 4xy - y^2.
    \]
    This gives an example with $C(w) = 1$.
\end{itemize}

\subsubsection{Waring type $\langle 1^2, 1 \rangle$}
\begin{itemize}
    \item For $w = 2(x^5)^* + (x^4y)^* - (y^5)^*$ over $\FF_{19}$, the computation in Subsubsection \ref{sect:1^21} gives the following quadrics $f_2 \in V_{2^2, 1:w}$:
    \[
        \begin{aligned}
        &xy, \\ &-2x^2 + 2xy - 2y^2 = -2(x+7y)(x+11y), 
        \\ &-2x^2 + 2xy + 2y^2 = -2(x+4y)(x-5y)
        \end{aligned}
    \]
    This gives an example with $C(w) = -3$.
    \item For $w = 3(x^5)^* + (x^4y)^* - (y^5)^*$ over $\FF_{7}$, the quadrics $f_2 \in V_{2^2, 1:w}$ are
    \[
        \begin{aligned}
        &xy, \\ &-2x^2 + 3xy + 2y^2 = -2(x-2y)(x-3y), 
        \\ &-2x^2 + 3xy - 2y^2 = -2(x+y)^2
        \end{aligned}
    \]
    This gives an example with $C(w) = -2$.
    \item For $w = (x^4y)^* - (y^5)^*$ over $\FF_{7}$, the quadrics $f_2 \in V_{2^2, 1:w}$ are
    \[
        \begin{aligned}
        &xy, \\ &-2x^2 + 2y^2 = -2(x-y)(x+y), 
        \\ &-2x^2 - 2y^2 = -2(x^2 + y^2)
        \end{aligned}
    \]
    This gives an example with $C(w) = -1$.
    \item For $w = 4(x^5)^* + (x^4y)^* - (y^5)^*$ over $\FF_{11}$, 
    the quadrics $f_2 \in V_{2^2, 1:w}$ are
    \[
        \begin{aligned}
        &xy, \\ &-2x^2 + 4xy + 2y^2 = -2(x^2 - 2xy - y^2), 
        \\ &-2x^2 + 4xy - 2y^2 = -2(x-y)^2
        \end{aligned}
    \]
    This gives an example with $C(w) = 0$.
    \item For $w = (x^5)^* + (x^4y)^* - (y^5)^*$ over $\FF_{7}$, 
    the quadrics $f_2 \in V_{2^2, 1:w}$ are
    \[
        \begin{aligned}
        &xy, \\ &-2x^2 + xy + 2y^2 = -2(x^2 + 3xy - y^2), 
        \\ &-2x^2 + xy - 2y^2 = -2(x^2 + 3xy + y^2)
        \end{aligned}
    \]
    This gives an example with $C(w) = 1$.
\end{itemize}

\subsubsection{Waring type $\langle 1, 1, 1 \rangle$}
We only treat the case when $\FF_q = \FF_p$ and 
\[
    w = a(x^5)^* + b(y^5)^* + c (-(x^5)^* - (x^4y)^* - (x^3y^2)^* - (x^2y^3)^* - (xy^4)^* - (y^5)^*)
\]
with $abc \neq 0$.
By the computation in Section \ref{sect:other}, the square of a quadric $f_2 = s_0x^2 + s_1xy + s_2y^2$ is apolar to $w$ if
\[
    as_0^2 - c(s_0 + s_1 + s_2)^2 = bs_2^2 - c(s_0 + s_1 + s_2)^2 = 0.
\]
This has nontrivial solutions if and only if $a, b, c$ are in the same class in $\FF_q^\times / \FF_q^{\times 2}$.
Multiplying a constant if necessary, 
we may assume $a = 1/\alpha^2, b = 1/\beta^2 , c = 1/\gamma^2$.
Then the quadrics are represented by
\[
    \alpha (y^2 - xy) + \gamma xy + \beta (x^2 - xy).
\]
Its discriminant is
\[
    D(\alpha, \beta, \gamma) \coloneqq \alpha^2 + \beta^2 + \gamma^2 - 2 \alpha \beta - 2 \beta \gamma - 2 \gamma \alpha,
\]
and thus
\[
    \begin{aligned}
        C(w) = -\left( \frac{D(\alpha, \beta, \gamma)}{p} \right) - \left( \frac{D(\alpha, \beta, -\gamma)}{p} \right) - \left( \frac{D(\alpha, -\beta, \gamma)}{p} \right) - \left( \frac{D(\alpha, -\beta, -\gamma)}{p} \right),
    \end{aligned}
\]
where $\left( \frac{\cdot}{p} \right)$ is the Legendre symbol.

The following table gives $D(\alpha, \beta, \gamma)$ and $C(w)$ for specific choices of $\alpha, \beta, \gamma$, where
\[
    w = \frac{1}{\alpha^2}(x^5)^* + \frac{1}{\beta^2}(y^5)^* + \frac{1}{\gamma^2} (-(x^5)^* - (x^4y)^* - (x^3y^2)^* - (x^2y^3)^* - (xy^4)^* - (y^5)^*)
\]
is defined over $\FF_q$.  The symbol $+$ indicates a nonzero square, $-$ a nonsquare, and $0$ the zero element of $\FF_q$.

\begin{tabular}{c|c||cccc|c}
    $\alpha, \beta, \gamma$ & $q$ & $D(\alpha, \beta, \gamma)$ & $D(\alpha, \beta, -\gamma)$ & $D(\alpha, -\beta, \gamma)$ & $D(\alpha, -\beta, -\gamma)$ & $C(w)$ \\
    \hline
    $1, 2, 3$ & & $-8$ & $28$ & $24$ & $12$ & \\ \hline
    & 337 & $+$ & $+$ & $+$ & $+$ & $-4$ \\
    & 19 & $+$ & $+$ & $+$ & $-$ & $-2$ \\
    & 11 & $+$ & $-$ & $-$ & $+$ & $0$ \\
    & 13 & $-$ & $-$ & $-$ & $+$ & $2$ \\
    & 7 & $-$ & $0$ & $-$ & $-$ & $3$ \\
    & 79 & $-$ & $-$ & $-$ & $-$ & $4$ \\
    \hline
    $1,3,4$ & & $-12$ & $52$ & $48$ & $16$ & \\ \hline
    & 13& $+$ & $0$ & $+$ & $+$ & $-3$ \\
    \hline
    $1,3,6$ & & $-8$ & $88$ & $76$ & $28$ & \\ \hline
    & 19& $+$ & $-$ & $0$ & $+$ & $-1$ \\
    & 7& $-$ & $+$ & $-$ & $0$ & $1$ \\
    \hline
    
\end{tabular}

\appendix
\section{The characteristic two case}
In this appendix, we consider the case $\ch \FF_q = 2$.
The treatment of this case proceeds similarly,
with modifications to the computation of $\widetilde{N}_{1^2, 3: w}$ and $\widetilde{N}_{2^2, 1: w} - \widetilde{N}_{1^2, 1^2, 1: w}$.

First, we treat $\widetilde{N}_{1^2, 3: w}$.
Recall that the number is 
\[
    \widetilde{N}_{1^2, 3:w} = \#V_{1^2, 3:w} = \#\{ [f_1] \in \PP(V^*_1) \mid f_1^2 \in w^\bot \}.
\]
This description and Lemma \ref{lem:113} remain valid in this case,
so no modification is needed.

Next, we treat $\widetilde{N}_{2^2, 1: w}$, $\widetilde{N}_{1^2, 1^2, 1: w}$, and their difference.
The arguments for $\rank (w) \le 2$ remain valid in characteristic two.
As in the Introduction, we use the function $\varepsilon$.
Then we obtain the same formula \eqref{eq:Sum}.
The condition $f_2^2 \in w^\bot$ implies two conditions on $f_2$:
\[
    \begin{cases}
        a_0s_0^2 + a_2 s_1^2 + a_4 s_2^2 = 0, \\
        a_1 s_0^2 + a_3 s_1^2 + a_5 s_2^2 = 0,
    \end{cases}
\]
where we write
\[
    w = \sum_{0 \le i \le 5} a_i (x^{5-i}y^i)^*, \quad 
    f_2 = s_0x^2 + s_1 xy + s_2y^2.
\]
These conditions are equivalent to linear conditions on $s_0, s_1, s_2$:
\begin{equation}\label{eq:char2}
    \begin{cases}
        \sqrt{a_0} s_0 + \sqrt{a_2} s_1 + \sqrt{a_4} s_2 = 0, \\
        \sqrt{a_1} s_0 + \sqrt{a_3} s_1 + \sqrt{a_5} s_2 = 0.
    \end{cases}
\end{equation}
If these conditions are linearly independent, we immediately obtain a similar estimate using \eqref{eq:Sum}.  In some cases they are linearly dependent and require more careful treatment.

\subsection{Waring type $\langle 1^3 \rangle$}
As in Section \ref{sect:1^3}, we may assume that $w$ is
\[
    w = a_0(x^5)^* + a_1(x^4y)^* + a_2(x^3y^2)^*
\]
with $a_2 \neq 0$.  Two conditions in \eqref{eq:char2} are linearly independent if $a_1 \neq 0$,
and then the only element $[f_2] \in V_{2^2, 1:w}$ is $[y^2]$.  This implies $\widetilde{N}_{2^2, 1:w} = \widetilde{N}_{1^2, 1^2, 1:w} = 1$.

If $a_1 = 0$ but $a_0 \neq 0$, we have
\[
    V_{2^2, 1:w} = \left\{ \left[s_0 \left( x^2 + \sqrt{a_0/a_2} xy \right) + s_1y^2\right] \Biggm| [s_0: s_1] \in \PP^1(\FF_q)\right\}.
\]
In this space, there are $q/2$ quadrics of type $\langle 1, 1 \rangle$,
$1$ quadric of type $\langle 1^2 \rangle$,
and $q/2$ quadrics of type $\langle 2 \rangle$.
Hence we have
\[
    \begin{aligned}
        \widetilde{N}_{2^2, 1:w} &= q+1, \\
        \widetilde{N}_{1^2, 1^2, 1:w} &= \frac{q}{2} \cdot 2 + 1 \cdot 1 + \frac{q}{2} \cdot 0 = q+1.
    \end{aligned}
\]

If $a_1 = a_0 = 0$, we have
\[
    V_{2^2, 1:w} = \{[s_0x^2 + s_1y^2] \mid [s_0: s_1] \in \PP^1(\FF_q)\},
\]
and the elements in the space are all of type $\langle 1^2 \rangle$.  We have
$\widetilde{N}_{2^2, 1:w} = \widetilde{N}_{1^2, 1^2, 1:w} = q+1$.

In any case, we have $\widetilde{N}_{2^2, 1:w} - \widetilde{N}_{1^2, 1^2, 1:w} = 0.$

\subsection{Waring type $\langle 1^2, 1 \rangle$}\label{sct:121char2}
As in Section \ref{sect:1^21}, we may assume that $w$ is
\[
    w = a_0(x^5)^* + a_1(x^4y)^* + a_5(y^5)^*
\]
with $a_1a_5 \neq 0$.  Two conditions in \eqref{eq:char2} are linearly independent if $a_0 \neq 0$,
and then the only element $[f_2] \in V_{2^2, 1:w}$ is $[xy]$.  This implies $\widetilde{N}_{2^2, 1:w} = 1$ and $\widetilde{N}_{1^2, 1^2, 1: w} = 2$.

If $a_0 = 0$, we have
\[
    V_{2^2, 1:w} = \left\{ \left[s_0 \left(x^2 + \sqrt{a_1/a_5} y^2 \right) + s_1xy \right] \Biggm|  [s_0: s_1] \in \PP^1(\FF_q)\right\}.
\]
In this space, there are $q/2$ quadrics of type $\langle 1, 1 \rangle$,
$1$ quadric of type $\langle 1^2 \rangle$,
and $q/2$ quadrics of type $\langle 2 \rangle$.
Hence we have
\[
    \begin{aligned}
        \widetilde{N}_{2^2, 1:w} &= q+1, \\
        \widetilde{N}_{1^2, 1^2, 1:w} &= \frac{q}{2} \cdot 2 + 1 \cdot 1 + \frac{q}{2} \cdot 0 = q+1.
    \end{aligned}
\]
We have 
\[
    \widetilde{N}_{2^2, 1: w} - \widetilde{N}_{1^2, 1^2, 1:w} = \begin{cases}
        -1 & (\text{if $a_0 \neq 0$}) \\
        0 & (\text{if $a_0 = 0$}).
    \end{cases}
\]

\subsection{Other Waring types}
We only estimate the number of points in $V_{2^2, 1: w}$.
As in Section \ref{sect:other}, we may assume that $w$ is
\[
    w = a(x^5)^* + b(y^5)^* + c((x^5)^* + (x^4y)^* + (x^3y^2)^* + (x^2y^3)^* + (xy^4)^* + (y^5)^*)
\]
with $abc \neq 0$.
In this case, two conditions in \eqref{eq:char2} are linearly independent,
and there is exactly one quadric $[f_2] \in V_{2^2, 1:w}$.
Thus we have
\[
    |\widetilde{N}_{2^2, 1: w} - \widetilde{N}_{1^2, 1^2, 1:w}| 
    \le |\widetilde{N}_{2^2, 1: w}| = 1.
\]

\subsection{Conclusion in the case $\ch \FF_q = 2$}
Summarizing, we obtain the following:
\begin{thm}\label{thm:ExpQuintchar2}
    Let $\ch \FF_q=2$, and $w \in V_5$. Then,
	\[
		\widehat{\Phi}(w) = \begin{cases}
			q^{-1} + q^{-2} - q^{-3} & (w=0) \\
			q^{-2} - q^{-3} & (\text{Waring type $\langle 1 \rangle$ or $\langle 1^2 \rangle$}) \\
			C(w) q^{-4} & (\text{other cases}),
		\end{cases}
	\]
	where we have
	\[
		C(w) = - \sum_{\substack{f_2 \in \PP(V^*_2) \\ f_2^2 \in w^\bot}} \varepsilon(f_2) \in \begin{cases}
			\{-1\} & (\text{Waring type of $w$ is $\langle 1, 1 \rangle$}) \\
			\{1\} & (\text{Waring type of $w$ is $\langle 2 \rangle$}) \\
			\{0\} & (\text{Waring type of $w$ is $\langle 1^3 \rangle$}) \\
            \{0, -1\} & (\text{Waring type of $w$ is $\langle 1^2, 1 \rangle$}) \\
			\{0, \pm 1\} & (\text{other Waring types})
		\end{cases}
	\]
\end{thm}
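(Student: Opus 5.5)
The proof of Theorem \ref{thm:ExpQuintchar2} assembles the characteristic-two computations of this appendix, in the same way as Theorem \ref{thm:ExpQuint}. The contraction $q^6\widehat{\Phi}(w) = 1 + qN_w - N_0$ is characteristic-free. So is the identity $\bm{1}_{\Disc} = \#\psi_{1^2,3}^{-1} - \#\psi_{1^2,1^2,1}^{-1} + \#\psi_{2^2,1}^{-1}$, since it only uses splitting types. The fiber counts $(q+1)(q^2+q+1) + q^3\widetilde{N}_{1^2,3:w}$, $(q^2+q+1) + q\widetilde{N}_{2^2,1:w}$ and $(q+1)^2 + q\widetilde{N}_{1^2,1^2,1:w}$ likewise do not depend on the characteristic; they only use the fact that $(w,f_2^2f_1)=0$ is a linear condition on $f_1$, and similarly for the other two maps. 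Hence the final identity
\[
q^6\widehat{\Phi}(w) = q^4\widetilde{N}_{1^2,3:w} + q^2(\widetilde{N}_{2^2,1:w} - \widetilde{N}_{1^2,1^2,1:w})
\]
remains valid, and it suffices to insert the values of the $\widetilde{N}$'s.

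For $w=0$ and for catalecticant rank at most $2$, the table of values is unchanged. Theorem \ref{thm:CI} and Lemma \ref{lem:113} hold in every characteristic, and the descriptions of $\{g : g^2 \in w^\bot\}$ as $f_1V_1^*$ or $\FF_q f_2$ are purely ideal-theoretic. The one point to check is the type count in $f_1V_1^*$: it contains exactly one form of type $\langle 1^2\rangle$ and $q$ of type $\langle 1,1\rangle$, which holds regardless of $p$. Substituting gives:
\begin{itemize}
\item $q^{-1}+q^{-2}-q^{-3}$ for $w=0$;
\item $q^{-2}-q^{-3}$ for Waring types $\langle 1\rangle$ and $\langle 1^2\rangle$, since $\widetilde{N}_{1^2,3:w}=1$ and the difference is $-q$;
\item $-q^{-4}$ for type $\langle 1,1\rangle$;
\item $q^{-4}$ for type $\langle 2\rangle$.
\end{itemize}
In the rank-$3$ cases, $\widetilde{N}_{1^2,3:w}=0$ by Lemma \ref{lem:113}, so $\widehat{\Phi}(w)=C(w)q^{-4}$ with $C(w)$ given by \eqref{eq:Sum}.

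It remains to pin down the possible values of $C(w)$ in rank $3$. Frobenius is a bijection on $\FF_q$, so the quadratic conditions $f_2^2\in w^\bot$ become the linear system \eqref{eq:char2}. For type $\langle 1^3\rangle$ the subsection above gives difference $0$ in all three subcases. For type $\langle 1^2,1\rangle$ it gives $-1$ or $0$. For the remaining types I would argue as follows:
\begin{itemize}
\item First, the statement that the two linear conditions in \eqref{eq:char2} are independent is a geometric statement, so I may pass to $\overline{\FF_q}$, where any such $w$ has Waring type $\langle 1,1,1\rangle$.
\item After a coordinate change I may take $w$ in the normal form with $abc\neq 0$. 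The two conditions then become $\sqrt{a+c}\,s_0+\sqrt{c}\,s_1+\sqrt{c}\,s_2=0$ and $\sqrt{c}\,s_0+\sqrt{c}\,s_1+\sqrt{b+c}\,s_2=0$.
\item Their $2\times2$ minors are $\sqrt{c}\sqrt{a}$, $\sqrt{c}\sqrt{b}$ and a third minor. Since $abc\neq0$, the first two are nonzero, so the conditions are independent.
\item Independence is preserved by the coordinate change, so over $\FF_q$ the space $V_{2^2,1:w}$ has exactly one point (the unique solution is Frobenius-stable).
\item That single quadric contributes $-\varepsilon(f_2)\in\{0,\pm1\}$ to $C(w)$.
\end{itemize}

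The step I expect to need the most care is the reduction to the normal form. The coordinate changes in characteristic two must be justified over $\overline{\FF_q}$, and I must confirm that for types $\langle 2,1\rangle$ and $\langle 3\rangle$ independence of \eqref{eq:char2} is an $\FF_q$-rational property, so that the solution point is rational. This holds because the rank of the linear system is invariant under extension of scalars. Finally, Theorem \ref{thm:Quint} in characteristic two follows: if $\Cat_{2,2}(x,y;w)\neq 0$, then by Theorem \ref{thm:CI}(ii) the rank is $3$, so $\widehat{\Phi}(w)=C(w)q^{-4}$ with $|C(w)|\le 1\le 4$.
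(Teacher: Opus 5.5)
Your proposal is correct and follows the paper's own argument. You use the characteristic-free reduction to $q^4\widetilde{N}_{1^2,3:w} + q^2(\widetilde{N}_{2^2,1:w}-\widetilde{N}_{1^2,1^2,1:w})$, keep the rank-$\le 2$ values unchanged, and treat the linearized conditions \eqref{eq:char2} case by case; your explicit minors $\sqrt{ac}$, $\sqrt{bc}$ and the Frobenius-descent remark for types $\langle 2,1\rangle$ and $\langle 3\rangle$ fill in the independence and rationality claims that the paper only asserts.
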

\begin{proof}[Proof of Theorem \ref{thm:Quint} if $\ch \FF_q = 2$]
Assume that $\Cat_{2,2}(x,y\colon w)$ does not vanish.
By Theorem \ref{thm:CI} (ii) and Theorem \ref{thm:ExpQuintchar2}, we have
\[
    \widehat{\Phi}(w) = C(w)q^{-4}
\]
with $|C(w)| \le 1$.
\end{proof}

\subsection{Examples}
Over $\FF_2$, the integer $C(w)$ attains every value listed in Theorem \ref{thm:ExpQuintchar2}.
Since the examples with Waring type $\langle 1^2, 1\rangle$ can be obtained from Section \ref{sct:121char2}, we only give examples when the Waring type of $w$ is $\langle 1, 1, 1 \rangle, \langle 2, 1\rangle$ or $\langle 3 \rangle$.
\begin{itemize}
    \item If we take $a = b = c = 1$, then
    \[
    w = (x^4y)^* + (x^3y^2)^* + (x^2y^3)^* + (xy^4)^*
    \]
    and the unique $[f_2] \in V_{2^2, 1:w}$ is $x^2 + xy + y^2$.
    This is irreducible over $\FF_2$ and we have $\varepsilon(f_2) = -1$.
    \item We take
    \[
        w = (x^5)^* + (x^4y)^* + (x^3y^2)^* + (xy^4)^* + (y^5)^*.
    \]
    Its Waring type is $\langle 2, 1\rangle$, and the unique $[f_2] \in V_{2^2, 1:w}$ is $x^2 + y^2$.
    We also have $\varepsilon(f_2) = 0$.
    \item We take
    \[
        w = (x^5)^* + (x^2y^3)^* + (y^5)^*.
    \]
    Its Waring type is $\langle 3 \rangle$, and the unique $[f_2] \in V_{2^2, 1:w}$ is $xy + y^2$.
    This is reducible over $\FF_2$ and $\varepsilon(f_2) = 1$.
\end{itemize}

\end{document}